\newcommand{\excise}[1]{}
\newcommand{\parallelsum}{\mathbin{\!/\mkern-5mu/\!}}
\newcommand\RR{\mathbb{R}}
\newtheorem{theorem}{Theorem}
\newtheorem{definition}{Definition}
\newtheorem{proposition}{Proposition}
\newtheorem{corollary}{Corollary}
\newcommand{\RNum}[1]{\uppercase\expandafter{\romannumeral #1\relax}}
\begin{document}

	\title{\mbox{}\\[-11ex]Geodesic Distance Estimation with Spherelets}
	\vspace{-5ex}
	\author{\\[1ex]Didong Li$^1$ and David B Dunson$^{1,2}$ \\ 
		{\em Department of Mathematics$^1$ and Statistical Science$^2$, Duke University}}
	\date{\vspace{-5ex}}
	
	\maketitle
	
Many statistical and machine learning approaches rely on pairwise distances between data points.  The choice of distance metric has a fundamental impact on  performance of these procedures, raising questions about how to appropriately calculate distances.  When data points are real-valued vectors, by far the most common choice is the Euclidean distance. This article is focused on the problem of how to better calculate distances taking into account the intrinsic geometry of the data, assuming data are concentrated near an unknown subspace or manifold.  The appropriate geometric distance corresponds to the length of the shortest path along the manifold, which is the geodesic distance.  When the manifold is unknown, it is challenging to accurately approximate the geodesic distance.  Current algorithms are either highly complex, and hence often impractical to implement, or based on simple local linear approximations and shortest path algorithms that may have inadequate accuracy.  We propose a simple and general alternative, which uses pieces of spheres, or spherelets, to locally approximate the unknown subspace and thereby estimate the geodesic distance through paths over spheres.  Theory is developed showing lower error for many manifolds, with applications in clustering, conditional density estimation and mean regression. The conclusion is supported through multiple simulation examples and real data sets.
\vspace{0.5cm}

\noindent Key Words: Clustering; Conditional density estimation; Curvature; Geodesic distance; Kernel regression; Manifold learning; Pairwise distances; Spherelets.
	
	\section{Introduction}
\label{sec:intro}
Distance metrics provide a key building block of a vast array of statistical procedures, ranging from clustering to dimensionality reduction and data visualization. Indeed, one of the most common representations of a data set $\{ x_i \}_{i=1}^n$, for $x_i \in \mathcal{X} \subset \RR^D$, is via a matrix of pairwise distances between each of the data points.  The key question that this article focuses on is how to represent distances between data points $x$ and $y$ in a manner that takes into account the intrinsic geometric structure of the data.  Although the standard choice in practice is the Euclidean distance, this choice implicitly assumes that the data do not have any interesting nonlinear geometric structure in their support.  In the presence of such structure, Euclidean distances can provide a highly misleading representation of how far away different data points are.  

This issue is represented in Figure \ref{fig:0}, which shows toy data sampled from a density concentrated close to an Euler spiral.  It is clear that many pairs of points that are close in Euclidean distance are actually far away from each other if one needs to travel between the points along a path that does not cross empty regions across which there is no data but instead follows the `flow' of the data.  As a convenient, if sometimes overly-simplistic, mathematical representation to provide a framework to address this problem, it is common to suppose that the support $\mathcal{X}=\mathcal{M}$, with $\mathcal{M}$ corresponding to a $d$-dimensional Riemannian manifold.  For the data in Figure \ref{fig:0}, the manifold $\mathcal{M}$ corresponds to the $d=1$ dimensional curve shown with a solid line; although the data do not fall exactly on $\mathcal{M}$, we will treat such deviations as measurement errors that can be adjusted for statistically in calculating distances.  

\begin{figure}[htbp]
	\centering
	\includegraphics[width=0.5\textwidth, height=0.3\textheight]{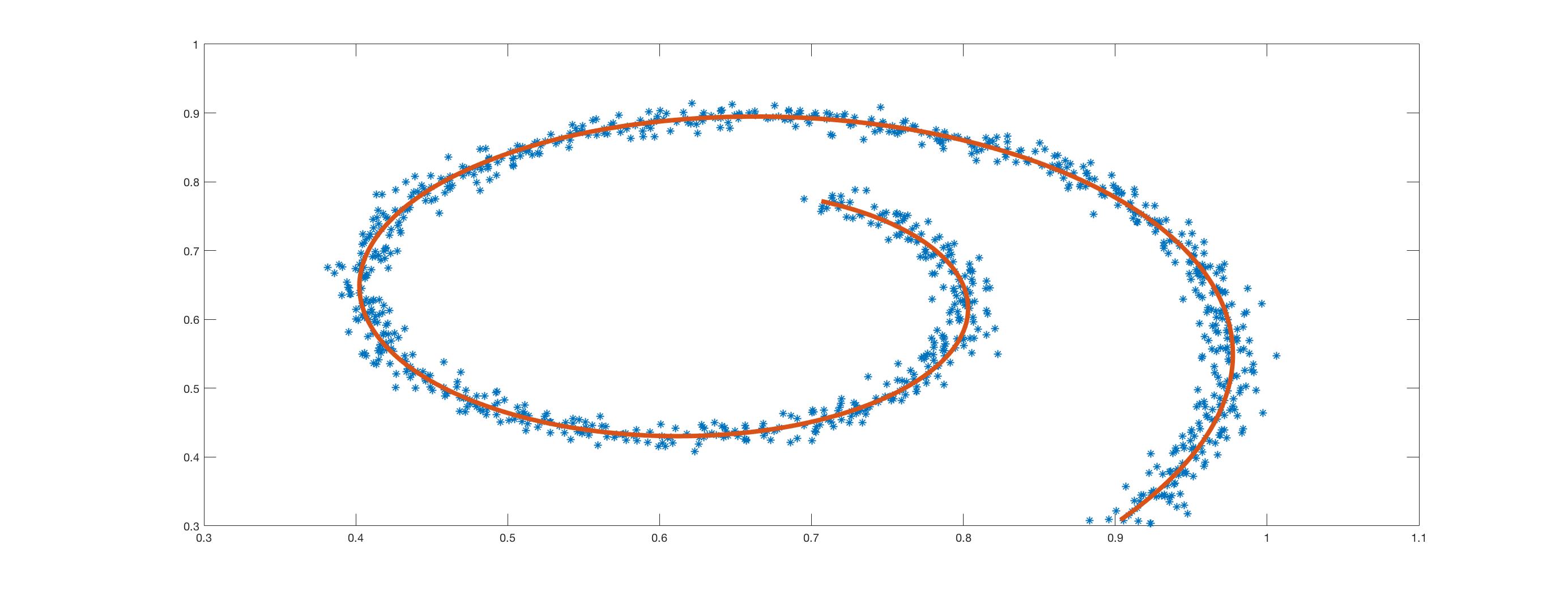}
	\caption[Noisy Euler Spiral]{Noisy Euler Spiral.}\label{fig:0}
	
\end{figure}


The shortest path between two points $x$ and $y$ that both lie on a manifold $\mathcal{M}$ is known as the geodesic, with the length of this path corresponding to the geodesic distance.  If $x$ and $y$ are very close to each other, then the Euclidean distance provides an accurate approximation to the geodesic distance but otherwise, unless the manifold has very low curvature and is close to flat globally, Euclidean and geodesic distances can be dramatically different.  The accuracy of Euclidean distance in small regions has been exploited to develop algorithms for approximating geodesic distances via graph distances.  Such approaches define a weighted graph in which edges connect neighbors and weights correspond to the Euclidean distance.  The estimated geodesic distance is the length of the shortest path on this graph; for details, see \cite{isomap2000} and \cite{silva2003global}.  There is a rich literature considering different constructions and algorithms for calculating the graph distance including \cite{geodist2008}, \cite{geodist2007} and \cite{geodist2004}.
In using the Euclidean distance within local neighborhoods, one needs to keep neighborhoods small to control the global approximation error.  This creates problems when the sample size $n$ is not sufficiently large and when the density $\rho$ of the data points is not uniform over $\mathcal{M}$ but instead is larger in certain regions than others. 

A good strategy for more accurate geodesic distance estimation is to improve the local Euclidean approximation while continuing to rely on graph distance algorithms. A better local approximation leads to better global approximation error.  This was the focus of a recent local geodesic distance estimator proposed in 
\cite{wu2018think} and \cite{malik2019connecting}.  Their covariance-corrected estimator adds an adjustment term to the Euclidean distance, which depends on the projection to the normal space.  This provides a type of local adjustment for curvature, and they provide theory on approximation accuracy.  However, their approach often has poor empirical performance in our experience, potentially due to statistical inaccuracy in calculating the adjustment and to lack of robustness to measurement errors.

We propose an alternative local distance estimator, which has the advantage of providing a simple and transparent modification of Euclidean distance to incorporate curvature.  This is accomplished by approximating the manifold in a local neighborhood using a sphere, an idea proposed in \cite{spherelets} but for manifold learning and not geodesic distance estimation.  Geodesic distance estimation involves a substantially different goal, and distinct algorithms and theory need to be developed.  The sphere has the almost unique features of both accounting for non-zero curvature and having the geodesic distance between any two points in a simple closed form; even for simple manifolds the geodesic is typically intractable.  We provide a transparent and computationally efficient algorithm, provide theory justifying accuracy and show excellent performance in a variety of applications including clustering, conditional density estimation and mean regression on multiple real data sets.

\section{Methodology}
Throughout this paper, we assume $M$ is a smooth compact Riemannian manifold with Riemannian metric $g$. Letting $\gamma(s)$ be a geodesic in arc length parameter $s$, the geodesic distance $d_M$ is defined by 
$$d_M(x,y)\coloneqq \inf \left\{L(\gamma)\mid\gamma(0) = x,\ \gamma(S)=y\right\},$$
where $L(\gamma)\coloneqq \int_0^S g\{\gamma'(s),\gamma'(s)\}^{1/2}\mathrm{dt}$ is the length of curve $\gamma$. Given points $X=\{x_1,\cdots,x_n\}$ on the manifold, the goal is to estimate the pairwise distance matrix $GD\in\RR^{n\times n}$ where $GD_{ij}=d_M(x_i,x_j)$. First we propose a local estimator, that is, to estimate $GD_{ij}$  where $x_i$ and $x_j$ are close to each other, and then follow the local-to-global philosophy to obtain a global estimator, for arbitrary $x_i$ and $x_j$.

\subsection{Local Estimation}\label{local_dist}
In this subsection, we focus on geodesic distance estimation between neighbors. The simplest estimator of $d_M(x_i,y_i) $ is $\|x_i-x_j\|$, denoted by $d_E(x_i,x_j)$. However, the estimation error of the Euclidean distance depends on the curvature linearly. As a result, a nonlinear estimator incorporating curvature needs to be developed to achieve a smaller estimation error for curved manifolds. We propose a nonlinear estimator using spherical distance, which is motivated by the fact that osculating circles/spheres approximate the manifold better than tangent lines/spaces. On the osculating sphere, the geodesic distance admits an analytic form, which we use to calculate local geodesic distances.

Let $S_{x_i}(V,c,r)$ be a $d$ dimensional sphere centered at $c$ with radius $r$ in $d+1$ dimensional affine space $x_i+V$, approximating $M$ in a local neighborhood of $x_i$. Letting $\pi$ be the orthogonal projection from the manifold to the sphere, the spherical distance is defined as
\begin{eqnarray}
d_S(x_i,x_j) &\coloneqq & r \arccos\left\{ \frac{\pi(x_i)-c}{r}\cdot \frac{\pi(x_j)-c}{r} \right\}, \label{eq:dS}
\end{eqnarray}
the geodesic distance between $\pi(x_i)$ and $\pi(x_j)$ on the sphere $S_{x_i}(V,c,r)$. The spherical distance depends on the choice of sphere $S_{x_i}(V,c,r)$, which will be discussed in section \ref{CSPCA}.

\subsection{Global Estimation}\label{global_dist}

We now consider  global estimation of the geodesic distance $d_M(x_i,x_j)$ for any $x_i,x_j$. The popular Isomap algorithm was proposed in  \cite{isomap2000} for dimension reduction for manifolds isometrically embedded in higher dimensional Euclidean space. Isomap relies on estimating the geodesic distance using the graph distance based on a local Euclidean estimator.  Let $G$ be the graph with vertices $x_i$. For any two points $x_i$ and $x_j$ that are close to each other, Isomap estimates $d_M(x_i,x_j)$ using $\|x_i-x_j\|$.  This leads to the following global estimator of $d_M(x_i,x_j)$, for any two points 
$x_i,x_j\in X$,
\begin{eqnarray}
d_{EG}(x_i,x_j) & \coloneqq & \min _P\sum_{l=0}^{p-1}\|x_{i_l}-x_{i_{l+1}}\|, \label{eq:isomapGD}
\end{eqnarray}
where $P$ varies over all paths along $G$ having $x_{i_0}=x_i$ and $x_{i_p}=x_j$.  In particular, the global distance is defined by the length of the shortest path on the graph, where the length of each edge is given by the Euclidean distance.  In practice, local neighbors are determined by a $k$-nearest neighbors algorithm, with the implementation algorithm given in Section \ref{Algorithms}.

The estimator in expression (\ref{eq:isomapGD}) has been successfully implemented in many different contexts.  However, the use of a local Euclidean estimator $\|x_{i_l}-x_{i_{l+1}}\|$ is a  limitation, and one can potentially improve the accuracy of the estimator by using a local approximation that can capture curvature, such as $d_S(x_i,x_j)$ in (\ref{eq:dS}).
This leads to the following alternative estimator: 
\begin{eqnarray}
d_{SG}(x,y)\coloneqq\min _P\sum_{l=0}^{p-1}d_S(x_{i_l}-x_{i_{l+1}}), \label{eq:sphereletGD}
\end{eqnarray}
where $P$ is as defined for (\ref{eq:isomapGD}) and an identical graph paths algorithm can be implemented as for Isomap, but with spherical distance used in place of Euclidean distance in the local component.

\subsection{Osculating Sphere}\label{CSPCA}

In order to calculate the local spherical distances necessary for computing (\ref{eq:sphereletGD}), we first need to estimate `optimal' approximating spheres within each local neighborhood, characterized by the $k$ nearest neighbors of $x_i$, denoted by $X_i^{[k]}$. The local sample covariance matrix is defined as $\Sigma_{k}(x_i) = k^{-1}\sum_{x_j\in X_i^{[k]}}(x_j-x_i)(x_j-x_i)^T$.  The eigen-space spanned by the first $d+1$ eigenvectors of $\Sigma_{k}(x_i)$, denoted by $V^*=\mathrm{span}\left[\mathrm{evec}_{1}\{\Sigma_k(x_i)\},\cdots,\mathrm{evec}_{d+1}\{\Sigma_k(x_i)\}\right]$ is the best estimator of the $d+1$ dimensional subspace $V$. Here we are ordering the eigenvectors by the corresponding eigenvalues in decreasing order.

Observe that the target sphere $S_{x_i}(V^*,c^*,r^*)$ passes through $x_i$ so we have $r^* = \|c^*-x_i\|$.  Hence, the only parameter to be determined is $c^*$ and then $r^*=\|c^*-x_i\|$. To estimate $c^*$, we propose a centered $k$-osculating sphere algorithm. Suppose $x_j\in S_{x_i}(V^*,c^*,r^*)$, then the projection of $x_j$ to $x_i+V^*$, denoted by $y_j=x_i+V^*V^{*\top}(x_j-x_i)$, is among the zeros of the function $\|y-c^*\|^2-r^{*2}$ where $r=\|c^*-x_i\|=\|c^*-y_i\|$. We use this to define a loss function for estimating $c$ in Definition \ref{def:CSPCA}; related `algebraic' loss functions were considered in \cite{coope1993circle} and \cite{spherelets}.

\begin{definition}\label{def:CSPCA}
	Under the above assumptions and notations, let $c^*$ be the minimizer of the following optimization problem:
	\begin{equation}\label{opt}
	\underset{c}{\arg\min}\ \sum_{x_j\in X^{[k]}_i}\left(\|y_j-c\|^2-\|y_i-c\|^2\right)^2.
	\end{equation}
	Letting $r^*=\|x_i-c^*\|$, the sphere $S_{x_i}(V^*,c^*,r^*)$ is called the centered $k$-osculating sphere of $X$ at $x_i$. 
\end{definition}

We can tell from the definition that the centered sphere is a nonlinear analogue of centered principal component analysis to estimate the tangent space. There is one additional constraint for the centered $k$-osculating sphere: the sphere passes through $x_i$. This constraint is motivated by the proof of Theorem \ref{thm:general}, see the supplementary materials. 

Observe that the optimization problem is convex with respect to $c$ and we can derive a simple analytic solution, presented in the following theorem.

\begin{theorem}\label{thm:CSPCA}
	The minimizer of the optimization problem (\ref{opt}) is given by:
	$$c^*= \frac{1}{2}H^{-1}f,$$
	where $H =\sum_{x_j\in X^{[k]}_i} (y_j-y_i)(y_j-y_i)^\top$ and $f=\sum_{x_j\in X^{[k]}_i}(\|y_j\|^2-\|y_i\|^2)(y_j-y_i)$.

\end{theorem}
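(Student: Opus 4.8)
The plan is to observe that, although the objective in (\ref{opt}) looks quartic in $c$, it is in fact an ordinary linear least-squares problem. Expanding each summand,
\[
\|y_j-c\|^2-\|y_i-c\|^2 = \|y_j\|^2-\|y_i\|^2-2(y_j-y_i)^\top c ,
\]
so the quadratic term $\|c\|^2$ cancels and the objective becomes $\Phi(c)=\sum_{x_j\in X^{[k]}_i}\big(b_j-2a_j^\top c\big)^2$, where I set $a_j\coloneqq y_j-y_i$ and $b_j\coloneqq\|y_j\|^2-\|y_i\|^2$. This is a sum of squares of affine functions of $c$, hence convex, which also confirms the remark made just before the theorem.

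Next I would compute the gradient and set it to zero. We have $\nabla\Phi(c)=-4\sum_j a_j\big(b_j-2a_j^\top c\big)$, so the stationarity condition reads $\sum_j a_j b_j = 2\big(\sum_j a_j a_j^\top\big)c$, that is, $2Hc=f$ with $H=\sum_j a_j a_j^\top$ and $f=\sum_j a_j b_j$, which are exactly the $H$ and $f$ in the statement. Solving gives $c^*=\tfrac12 H^{-1}f$. Since the Hessian of $\Phi$ equals $8H\succeq 0$, every stationary point is a global minimizer; when $H$ is positive definite this minimizer is unique, completing the argument.

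The only point requiring care is the invertibility of $H$. Because all the projected neighbours $y_j$ lie in the affine subspace $x_i+V^*$, it is natural to read $H$ as an operator on the $(d+1)$-dimensional space $V^*$, using coordinates with respect to an orthonormal basis of $V^*$; then $H=\sum_j (y_j-y_i)(y_j-y_i)^\top$ is invertible precisely when the $y_j$ affinely span $x_i+V^*$, which holds generically once $k\ge d+1$. I expect this to be the main (and essentially only) obstacle worth a sentence in the write-up: one either assumes general position of the neighbours, or notes that $V^*$ was defined as the span of the top $d+1$ eigenvectors of $\Sigma_k(x_i)$, so the centered neighbours already span $V^*$ and hence $H\succ 0$. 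Everything else reduces to the routine least-squares computation above.
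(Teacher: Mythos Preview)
Your proposal is correct and follows essentially the same route as the paper: both expand $\|y_j-c\|^2-\|y_i-c\|^2=\|y_j\|^2-\|y_i\|^2-2(y_j-y_i)^\top c$ to reduce the objective to a quadratic in $c$, then minimize to obtain $c^*=\tfrac12 H^{-1}f$. Your added remarks on convexity and on the invertibility of $H$ (which the paper simply assumes) are welcome refinements but do not change the approach.
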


\subsection{Algorithms}\label{Algorithms}
In this subsection, we present algorithms to calculate the spherical distance. Before considering algorithms for distance estimation, we present the algorithm for the centered $k$-osculating sphere, shown in Algorithm \ref{alg:CSPCA}.

In real applications where the data are noisy, we recommend replacing the centered $k$-osculating sphere by an uncentered version because in this case the base point $x$ may not be on the manifold so shifting toward $x$ can negatively impact the performance. In addition, the constraint $r=\|x_i-c\|$ restricts the degrees of freedom when choosing the optimal $r$.  The only difference is that instead of centering at the base point $x$ and forcing $r=\|x_i-c\|$, we instead shift $x_{i}$ to the mean $\bar x=\frac{1}{n}\sum_{i=1}^n x_i$ and average $\|x_j-c\|$, as shown in Algorithm \ref{alg:SPCA}.

\hspace{-5pt} \begin{minipage}[t]{0.53\textwidth}
	\null 
	\begin{algorithm}[H]
		\SetKwData{Left}{left}\SetKwData{This}{this}\SetKwData{Up}{up}
		\SetKwFunction{Union}{Union}\SetKwFunction{FindCompress}{FindCompress}
		\SetKwInOut{Input}{input}\SetKwInOut{Output}{output}
		\Input{Data set $\{x_i\}_{i=1}^n$, base point $x$, manifold dimension $d$.}
		\Output{Sphere $S_x(V,c,r)$.}
		\BlankLine
		
		\emph{$\Sigma = \frac{1}{n}\sum_{i=1}^n (x_i-x)(x_i-x)^\top$}\;
		\emph{$V=\mathrm{span}\{\mathrm{evec}_1(\Sigma),\cdots,\mathrm{evec}_{d+1}(\Sigma)\}$}\;
		\emph{$y_i = x+VV^\top(x_i-x)$, $y=x$}\;
		\emph{$H=\sum_{i=1}^n(y_i-y)(y_i-y)^\top$}\;
		\emph{$f=\sum_{i=1}^n(\|y_i\|^2-\|y\|^2)(y_i-y)$}\;
		\emph{$c = \frac{1}{2}H^{-1}f$}\;
		\emph{$r = \|x-c\|$.}
		\caption{\hspace{-0pt }{\small Centered $k$-osculating sphere}}
		\label{alg:CSPCA}
	\end{algorithm}
\end{minipage}%
\begin{minipage}[t]{0.48\textwidth}
	\null
	\begin{algorithm}[H]
		\SetKwData{Left}{left}\SetKwData{This}{this}\SetKwData{Up}{up}
		\SetKwFunction{Union}{Union}\SetKwFunction{FindCompress}{FindCompress}
		\SetKwInOut{Input}{input}\SetKwInOut{Output}{output}
		\Input{Data set $\{x_i\}_{i=1}^n$, manifold dimension $d$.}
		\Output{Sphere $S_x(V,c,r)$.}
		\BlankLine
		
		\emph{$\Sigma = \frac{1}{n}\sum_{i=1}^n (x_i-\bar x)(x_i-\bar x)^\top$}\;
		\emph{$V=\mathrm{span}\{\mathrm{evec}_1(\Sigma),\cdots,\mathrm{evec}_{d+1}(\Sigma)\}$}\;
		\emph{$y_i = \bar x+VV^\top(x_i-\bar x)$}\;
		\emph{$H=\sum_{i=1}^n(y_i-\bar y)(y_i-\bar y)^\top$}\;
		\emph{$f=\sum_{i=1}^n\left(\|y_i\|^2-\frac{1}{n}\Sigma_{j} \|y_j\|^2\right)(y_i-\bar y)$}\;
		\emph{$c = \frac{1}{2}H^{-1}f$}\;
		\emph{$r = \frac{1}{n}\sum_{i=1}^n\|y_i-c\|$.}
		
		\caption{{\small $k$-osculating sphere}}
		\label{alg:SPCA}
	\end{algorithm}
	
\end{minipage}


\begin{algorithm}
	\SetKwData{Left}{left}\SetKwData{This}{this}\SetKwData{Up}{up}
	\SetKwFunction{Union}{Union}\SetKwFunction{FindCompress}{FindCompress}
	\SetKwInOut{Input}{input}\SetKwInOut{Output}{output}
	\Input{Data set $\{x_i\}_{i=1}^n$; tuning parameters $k,\ d$.}
	\Output{Pairwise distance matrix $SD$}
	\BlankLine
	\emph{Initialize $SD\in\RR^{n\times n}$ where $SD_{ij}=\infty$ for $i\neq j$ and $SD_{ii}=0$}\;
	\For{$i=1:n$}{
		\emph{Find $K$ nearest neighbors of $x_i$, $X_i^{[k]}=\{x_{i_1},\cdots,x_{i_k}\}$}\;
		\emph{Calculate the $p$-dimensional spherical approximation of $X_i^{[k]}$, denoted by $S^i_x(V_i,c_i,r_i)$} given by Algorithm \ref{Alg:CSPCA}\;
		\For{$j = 1:k$}{
			\emph{Calculate the projection of $x_{i_j}$ to the sphere $S^i_x(V_i,c_i,r_i)$, $\widehat{x}_{i_j}=c_i+\frac{r_i}{\|V_iV_i^\top(x_{i_j}-c_i)\|}V_iV_i^\top(x_{i_j}-c_i)$}\;
			\emph{$SD_{i i_j}=r_i\arccos\left(\frac{x_i-c_i}{r_i}\cdot \frac{x_{i_j}-c_i}{r_i}\right)$}
		}
	}
	\emph{Symmetrization: $SD= \frac{SD+SD^\top}{2}$.}
	\caption{Local Spherical Distance }
	\label{alg:local}
\end{algorithm}

From Algorithm \ref{alg:local} we obtain the local pairwise distance matrix $SD$, where $SD_{ij}$ denotes the distance between $x_i$ and $x_j$. However, if $x_i$ and $x_j$ are not neighbors of each other, the distance will be infinity, or equivalently speaking there is no edge connecting $x_i$ and $x_j$ in graph $G$. Then we need to convert the local distance to global distances by the graph distance proposed in Section \ref{global_dist}. There are multiple algorithms for shortest path search on graphs including the Floyd-Warshall algorithm (\cite{floyd1962algorithm} and \cite{warshall1962theorem}) and Dijkstra's algorithm (\cite{dijkstra1959note}); here we adopt the Dijkstra's algorithm, which is easier to implement. Algorithm \ref{alg:global} shows how to obtain the graph spherical distance from local spherical distance. 

\begin{algorithm}
	\SetKwData{Left}{left}\SetKwData{This}{this}\SetKwData{Up}{up}
	\SetKwFunction{Union}{Union}\SetKwFunction{FindCompress}{FindCompress}
	\SetKwInOut{Input}{input}\SetKwInOut{Output}{output}
	\Input{Local pairwise distance matrix $SD\in \RR^{n\times n}$.}
	\Output{Graph pairwise distance matrix $SD$.}
	\BlankLine
	\For{$i=1:n$}{
		\emph{$SD = min[SD , rep\{SD(:,i), n\} + rep\{SD(i,:) , n\}]$, where $rep(v,n)$ is $n$ copies of row/column vector $v$.}
	}
	\caption{Graph Spherical Distance}
	\label{alg:global}
\end{algorithm}

We note that in the local estimation, the computational complexity for $d_M(x_i,x_j)$ is $O(\min\{k,D\}^3)$, where $k$ is assumed to be much smaller than $n$. To compare with, the computational complexity of $d_E(x_i,x_j)$ is $O(D)$. Hence, in general, we are not introducing more computation cost by replacing the local Euclidean distance by the local spherical distance unless $d$ is not very small relative to $D$. Once the graph is determined, the computational complexity of Dijkstra's algorithm is $O(n^2)$, where $n$ is the sample size, and this complexity does not depend on which  local distance is applied to obtain the weights on the graph $G$. Hence, the total computational complexity for the graph Euclidean distance estimator is $O\left(nkD+n^2\right)$ while the complexity for the graph spherical distance estimator is $O\left(n\min\{k,D\}^3+n^2\right)$.
\section{Error Analysis}
In this section, we analyze why the spherical distance is a better estimator than the Euclidean distance from a theoretical perspective following a local-to-global philosophy. 
\subsection{Local Error}
First we study the local error, that is, $|d_S(x,y)-d_M(x,y)|$ for $y\in B_{\bar r}(x)$, where $B_{\bar r}(x)$ is the geodesic ball on $M$ centered at $x$ with radius $\bar r$. It is well known that the error of the Euclidean estimator is third order, as formalized in the following proposition.
\begin{proposition}\label{Euc_error}
	Assume $d_M(x,y) = s$, then 
	$$s-\frac{s^3}{24r_0^2} \leq d_E(x,y)\leq s ,$$
	with $\frac{1}{r_0 }= \sup\left\{\|\gamma''(s)\|\right\}$, where $\gamma$ varies among all geodesics on $M$ in arc length parameter. In terms of the error rate,  $d_E(x,y)=s+O(s^3)$.
\end{proposition}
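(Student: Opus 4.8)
The plan is to work with a unit-speed minimizing geodesic realizing the distance and to reduce everything to two elementary estimates: the triangle inequality in $\RR^D$ for the upper bound, and a double integration of the acceleration for the lower bound. Since $M$ is compact it is complete, so there is a unit-speed minimizing geodesic $\gamma$ of length $s = d_M(x,y)$ joining $x$ and $y$; I reparametrize it on the symmetric interval $[-s/2,s/2]$ so that $\gamma(-s/2)=x$ and $\gamma(s/2)=y$, and write $\gamma',\gamma''$ for the derivatives of the $\RR^D$-valued curve $\gamma$. Because $M$ is a compact smooth submanifold and $\gamma$ is a geodesic, $\gamma''(t)$ equals the second fundamental form applied to $(\gamma'(t),\gamma'(t))$, which is bounded; hence $r_0>0$ and $\|\gamma''(t)\|\le 1/r_0$ for all $t$. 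For the upper bound, $d_E(x,y) = \|\gamma(s/2)-\gamma(-s/2)\| = \bigl\| \int_{-s/2}^{s/2}\gamma'(t)\,dt\bigr\| \le \int_{-s/2}^{s/2}\|\gamma'(t)\|\,dt = s$, with equality only when the chord is straight.

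For the lower bound, the key point is to project the chord onto the velocity of $\gamma$ at its \emph{midpoint}. Let $v = \gamma'(0)$, a unit vector. By Cauchy--Schwarz,
$$d_E(x,y) = \|\gamma(s/2)-\gamma(-s/2)\| \ge \bigl\langle \gamma(s/2)-\gamma(-s/2),\, v\bigr\rangle = \int_{-s/2}^{s/2} \langle \gamma'(t), v\rangle\,dt,$$
so it suffices to bound $\langle\gamma'(t),v\rangle$ from below. Here I would use that $\gamma$ has unit speed, hence $\langle \gamma''(\tau),\gamma'(\tau)\rangle = 0$ for all $\tau$; writing $\gamma'(\tau) = v + w(\tau)$ with $w(0)=0$, this gives $\langle\gamma''(\tau),v\rangle = -\langle\gamma''(\tau),w(\tau)\rangle$, and since $\|w(\tau)\| = \bigl\|\int_0^\tau\gamma''\bigr\| \le |\tau|/r_0$ we obtain the refined estimate $|\langle\gamma''(\tau),v\rangle| \le \|\gamma''(\tau)\|\,\|w(\tau)\| \le |\tau|/r_0^2$. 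This extra power of $|\tau|$, compared to the crude $\|\gamma''\|\le 1/r_0$, is exactly what produces the sharp constant.

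Integrating once, for every $t\in[-s/2,s/2]$,
$$\langle\gamma'(t),v\rangle = 1 + \int_0^t \langle\gamma''(\tau),v\rangle\,d\tau \ge 1 - \frac{t^2}{2 r_0^2},$$
and integrating again over $[-s/2,s/2]$ yields
$$d_E(x,y) \ge \int_{-s/2}^{s/2}\Bigl(1 - \frac{t^2}{2 r_0^2}\Bigr)dt = s - \frac{s^3}{24 r_0^2},$$
which is the claimed inequality; the rate statement $d_E(x,y) = s + O(s^3)$ is then immediate since $r_0$ is a positive constant depending only on $M$. The main obstacle is conceptual rather than computational: a naive estimate—projecting onto $\gamma'(\pm s/2)$ or onto the chord direction, or bounding $\langle\gamma'',v\rangle$ by $1/r_0$—gives only $d_E \ge s - s^2/(2r_0)$ or $d_E \ge s - s^3/(6r_0^2)$, missing the factor $4$. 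Recovering the constant $1/24$ requires both expanding at the midpoint, which is where the symmetric interval pays off, and exploiting the orthogonality $\gamma''\perp\gamma'$ to gain the extra factor of $|\tau|$; the extremal case is the circle of radius $r_0$, for which $d_E = 2r_0\sin(s/2r_0) = s - s^3/(24 r_0^2) + O(s^5)$, confirming that the bound is tight to this order.
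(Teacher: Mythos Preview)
Your proof is correct. The paper itself does not supply a proof of this proposition: after stating it, the authors simply remark that ``these bounds are tight and the proof can be found in \cite{smolyanov2007chernoff}.'' So there is no argument in the paper to compare against beyond the citation.

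Your approach is self-contained and elementary, and the two devices you single out---parametrizing symmetrically about the midpoint and exploiting the orthogonality $\gamma''\perp\gamma'$ to upgrade the bound on $\langle\gamma'',v\rangle$ from $1/r_0$ to $|\tau|/r_0^2$---are exactly what is needed to recover the sharp constant $1/24$. The upper bound and the rate statement are immediate, and your check against the circle of radius $r_0$ confirms tightness. One very minor point of presentation: when you invoke Cauchy--Schwarz to write $\|\gamma(s/2)-\gamma(-s/2)\|\ge\langle\gamma(s/2)-\gamma(-s/2),v\rangle$, this is valid regardless of the sign of the right-hand side since $\|a\|\ge|\langle a,v\rangle|\ge\langle a,v\rangle$ for any unit $v$; you might state it that way to preempt a reader wondering about sign. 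Otherwise nothing needs fixing.
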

These bounds are tight and the proof can be found in \cite{smolyanov2007chernoff}.
The Euclidean distance is a simple estimator of the geodesic distance, and the error is $\frac{s^3}{24r_0^2}$. 
While this may seem to be a good result, if the manifold has high curvature, so that $r_0$ is very small, performance is not satisfactory.  This is implied by the $r_0^{-2}$ multiple on the error rate, and is also clearly apparent in experiments shown later in the paper.  

Now we consider the error of the spherical distance proposed in section \ref{local_dist}. For simplicity, we first consider the case in which $M=\gamma$ is a curve in $\RR^2$ with domain $[0,S]$. Without loss of generality, fix $x=\gamma(0)$ but vary $y=\gamma(s)$. Let  ${\bf n}$ be the unit normal vector of $\gamma$ at $x$, that is $\gamma''(0)=\kappa {\bf n}$. Let $r = \frac{1}{|\kappa|}$ and $c=x-\frac{1}{\kappa}\boldsymbol{n}$, which determine a circle $C_x(c,r)$ centered at $c$ with radius $r$. This circle $C_x(c,r)$ is called the osculating circle of the curve $\gamma$, which is the ``best" circle  approximation to the curve.  Letting $\pi: \gamma\rightarrow C_x(c,r)$ be the projection to the osculating circle, the error in $d_S(x,y)$ as an estimator of $d_M(x,y)$ is shown in the following theorem.
\begin{theorem}\label{thm:curve}
	Let $x=\gamma(0)$ and $y=\gamma(s)$, so $d_M(x,y)=s$, then
	$$d_S(x,y)=s+O(s^4).$$ 
\end{theorem}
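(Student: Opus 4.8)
The plan is to collapse $d_S(x,y)$ into an explicit scalar function of $s$, note that this function returns $s$ exactly when $\gamma$ is the osculating circle, and then Taylor-expand to see that a general $\gamma$ departs from that idealized case only at order $s^4$.

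First I would set up the Frenet frame at $x$: place $x$ at the origin with $T=\gamma'(0)=e_1$, and orient $e_2$ so that (after a reflection if necessary) $\kappa=\kappa(0)>0$; then the osculating circle $C_x(c,r)$ has $c=(0,1/\kappa)$ and $r=1/\kappa$. (If $\kappa(0)=0$ the ``circle'' degenerates to the tangent line and the estimate is contained in Proposition~\ref{Euc_error}, so I assume $\kappa\neq0$.) Since the projection $\pi$ onto $C_x(c,r)$ is radial from $c$, the dot product in the definition~(\ref{eq:dS}) of $d_S$ is exactly $\cos\angle(x-c,\,y-c)$, so $d_S(x,y)=r\,\angle(x-c,\,y-c)$ and $\pi$ need never be computed. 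Writing $\gamma(s)=(\gamma_1(s),\gamma_2(s))$ and reading this planar angle off the coordinates of $x-c=(0,-1/\kappa)$ and $y-c=(\gamma_1,\gamma_2-1/\kappa)$ yields the closed form
\[
d_S(x,y)=\frac{1}{\kappa}\arctan\!\left(\frac{\kappa\,\gamma_1(s)}{1-\kappa\,\gamma_2(s)}\right),
\]
valid for $s$ small (so that $1-\kappa\gamma_2(s)>0$ and the angle lies in $(0,\pi/2)$, consistent with the range of $\arccos$).

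The key observation is then that if $\gamma$ \emph{were} the osculating circle, one would have $\gamma_1(s)=\kappa^{-1}\sin\kappa s$ and $\gamma_2(s)=\kappa^{-1}(1-\cos\kappa s)$, so the argument of $\arctan$ is precisely $\tan(\kappa s)$ and the formula returns $s$ exactly. Hence it is enough to show that for a general $\gamma$ the ratio $\kappa\gamma_1(s)/(1-\kappa\gamma_2(s))$ agrees with $\tan(\kappa s)$ through order $s^3$; because $\arctan$ is smooth, this gives $d_S(x,y)=\kappa^{-1}\arctan\!\big(\tan\kappa s+O(s^4)\big)=s+O(s^4)$. To check the third-order agreement I would expand $\gamma$ with the planar Frenet equations $\gamma''=\kappa\mathbf{n}$, $\mathbf{n}'=-\kappa T$, which give $\gamma'''(0)=\kappa'\mathbf{n}-\kappa^2 T$ and hence $\gamma_1(s)=s-\tfrac{\kappa^2}{6}s^3+O(s^4)$, $\gamma_2(s)=\tfrac{\kappa}{2}s^2+\tfrac{\kappa'}{6}s^3+O(s^4)$. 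Substituting and expanding $1/(1-\kappa\gamma_2)$ as a geometric series, the $s^2$ contributions cancel — this is exactly the second-order tangency of the osculating circle — and the remaining expansion is $\kappa s+\tfrac{\kappa^3}{3}s^3+O(s^4)$, matching $\tan(\kappa s)=\kappa s+\tfrac13(\kappa s)^3+O(s^5)$.

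The one step that needs genuine care is this last computation: the $s^3$ coefficient receives contributions from both the cubic term of $\gamma_1$ and, through the denominator, the quadratic term of $\gamma_2$, and the arithmetic must reproduce exactly $\kappa^3/3$ — the coefficient of $\tan$ — since any other value would leave a nonzero $s^3$ error and collapse the theorem back to the Euclidean rate. (Carrying one more order identifies the leading error as $\tfrac{\kappa\kappa'}{24}s^4$, which vanishes precisely when $\gamma$ has locally constant curvature, i.e.\ coincides with its osculating circle — a useful sanity check.) A variant would avoid the closed form and expand $\cos\theta=(x-c)\cdot(y-c)/(r\|y-c\|)$ directly to get $\cos\theta=\cos(\kappa s)+O(s^5)$ and then invert, but that needs an extra elementary estimate to invert $\arccos$ near $1$, where its derivative is singular; the $\arctan$ form avoids this.
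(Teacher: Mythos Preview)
Your proof is correct and follows essentially the same route as the paper's: set up the Frenet frame at $x$, Taylor-expand $\gamma(s)$ to third order using the arc-length constraint, express the spherical angle via $\tan\theta=(\text{tangential component})/(\text{radial gap})$, and invert with the $\arctan$ series. Your framing --- checking that the ratio matches $\tan(\kappa s)$ through $s^3$ --- is a clean conceptual touchstone but computationally identical to what the paper does; the one substantive difference is that the paper leaves the radius $r$ free throughout and obtains the intermediate formula $d_S(x,y)=s-\tfrac16(\kappa-\tfrac1r)(\kappa-\tfrac2r)s^3+O(s^4)$, which it then reuses verbatim for the hypersurface case (Theorem~\ref{thm:hypersurf} and Corollary~\ref{cly:hypersurf}), whereas your choice $r=1/\kappa$ from the outset would need to be redone for that generalization.
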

Comparing to the error of Euclidean estimation in Proposition \ref{Euc_error}, the spherical estimate improves the error rate from $O(s^3)$ to $O(s^4)$.

The above result is for curves, and as a second special case we suppose that $M^d\subset \RR^{d+1}$ is a $d$ dimensional hyper-surface. Similar to the curve case, the spherical distance can be defined on any sphere $S_x(c,r)$ passing through $x$ with center $c$ and radius $r$ where $c=x-\frac{1}{\kappa}\boldsymbol{n}$ and $\boldsymbol{n}$ is the normal vector of the tangent space $T_xM$, $r=\frac{1}{|\kappa|}$. However, for geodesics along different directions, denoted by $\gamma_v\coloneqq \exp_x(sv)$ where $v\in UT_xM$, the curvature $\kappa_v(x)$ defined by  $\gamma_v''(0) = \kappa_v(x) \boldsymbol{n}$ might be different. Let $\kappa_{2}(x) = \sup_{v\in UT_xM}\kappa_v(x)$ and $\kappa_{1}(x) = \inf_{v\in UT_xM}\kappa_v(x)$, where the maximum and minimum can be achieved due to the compactness of $UT_xM$. Fix any $\kappa_0(x)\in[\kappa_{1}(x),\kappa_{2}(x)]$, let $S_x(c,r)$ be the corresponding sphere, and $\pi: M\rightarrow S_x(c,r)$ be the projection. The estimation error is given by the following theorem.
\begin{theorem}\label{thm:hypersurf}
	Fix $x\in M$, for $y=\exp_x(sv)$ such that $d_M(x,y)=s$, let $\kappa_y=\kappa_v(x)$, then the estimation error of spherical distance is given by 
	$$d_S(x,y)=s+(\kappa_y-\kappa_0)(\kappa_y-2\kappa_0)s^3+O(s^4).$$ 
\end{theorem}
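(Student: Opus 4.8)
The plan is to turn the claim into an explicit power‑series computation in $s$. Since the sphere $S_x(c,r)$ passes through $x$ we have $\pi(x)=x$, and for $y$ near $x$ the orthogonal projection onto the sphere is $\pi(y)=c+r(y-c)/\|y-c\|$; hence by (\ref{eq:dS}),
\[
d_S(x,y)=r\arccos A,\qquad A:=\frac{(x-c)\cdot(y-c)}{r\,\|y-c\|}.
\]
Writing $u:=y-x$, $p:=\boldsymbol n\cdot u$ and $q:=\|u\|^2=d_E(x,y)^2$, and using $x-c=\pm\tfrac1{\kappa_0}\boldsymbol n$ together with $r=1/|\kappa_0|$, a direct substitution gives
\[
(x-c)\cdot(y-c)=\tfrac1{\kappa_0^2}(1\pm\kappa_0 p),\qquad \|y-c\|^2=\tfrac1{\kappa_0^2}(\kappa_0^2q\pm2\kappa_0 p+1),\qquad A=\frac{1\pm\kappa_0 p}{\sqrt{\kappa_0^2q\pm2\kappa_0 p+1}},
\]
so the whole problem is reduced to the $s$‑expansions of the two scalars $p$ and $q$.

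Next I would Taylor‑expand the unit‑speed geodesic $\gamma_v$ in the ambient $\RR^{d+1}$. On a hypersurface the acceleration of a geodesic is normal, $\gamma_v''(s)=\kappa(s)\boldsymbol n(\gamma_v(s))$ with $\kappa(0)=\kappa_y$; differentiating this and using the Weingarten identity $\tfrac{d}{ds}\boldsymbol n(\gamma_v(s))=-S_{\gamma_v(s)}(\gamma_v'(s))$ (with $S$ the shape operator) gives $\gamma_v'''(0)=\kappa'(0)\boldsymbol n-\kappa_y S_x(v)$, where $\langle v,S_x(v)\rangle=\kappa_v(x)=\kappa_y$. Inserting $u=sv+\tfrac{s^2}{2}\kappa_y\boldsymbol n+\tfrac{s^3}{6}\gamma_v'''(0)+O(s^4)$ and decomposing $u=u_T+p\,\boldsymbol n$ into its $T_xM$‑component $u_T$ and its normal component, one reads off
\[
p=\tfrac{\kappa_y}{2}s^2+O(s^3),\qquad \|u_T\|^2=s^2-\tfrac{\kappa_y^2}{3}s^4+O(s^5),\qquad q=\|u_T\|^2+p^2=s^2-\tfrac{\kappa_y^2}{12}s^4+O(s^5),
\]
the $s^4$ terms using $\langle v,S_x(v)\rangle=\kappa_y$; the $O(s^3)$ and $O(s^4)$ coefficients of $p$, which also involve $\kappa'(0)$, $\kappa''(0)$ and $\|S_x(v)\|^2$, I would keep symbolic, anticipating that they cancel.

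Now I would substitute into $A$ and expand. The constant and $s^2$ terms come out as $1$ and $-\kappa_0^2/2$ (the latter is forced, since on a radius‑$r$ sphere the cosine of an arc of length $\approx s$ is $1-s^2/2r^2+\cdots$). The crux is the cancellation: an unknown $O(s^3)$ coefficient $p_3$ of $p$ enters the numerator $1\pm\kappa_0 p$ with weight $\pm\kappa_0$ and the radicand $\kappa_0^2q\pm2\kappa_0 p+1$ with weight $\pm2\kappa_0$, hence $(\kappa_0^2q\pm2\kappa_0 p+1)^{-1/2}$ with weight $\mp\kappa_0$, so $p_3$ drops out of $A$; the identical bookkeeping removes the $O(s^4)$ coefficient $p_4$. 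What survives is $A=1-\tfrac{\kappa_0^2}{2}s^2+(\text{a polynomial in }\kappa_0,\kappa_y)\,s^4+O(s^5)$. Finally, applying $\arccos(1-\delta)=\sqrt{2\delta}\,(1+\tfrac{\delta}{12}+O(\delta^2))$ with $\delta=1-A$, multiplying by $r=1/|\kappa_0|$, and collecting powers gives $d_S(x,y)=s+(\text{the stated function of }\kappa_y,\kappa_0)\,s^3+O(s^4)$; specializing $\kappa_0=\kappa_y$ must reduce this to $d_S=s+O(s^4)$, i.e.\ Theorem \ref{thm:curve}, which is a useful internal check.

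The step I expect to be the main obstacle is this last one. Because $A=1+O(s^2)$, the $s^3$ term of $d_S=r\arccos A$ depends on the $s^4$ data of $A$ (through $\sqrt{1-A}$), so $A$ must be expanded to fourth order and one must verify cleanly that the contributions of $\kappa'(0)$, $\kappa''(0)$ and $\|S_x(v)\|^2$ all cancel, so that the final $s^3$ coefficient involves only the two numbers $\kappa_0$ and $\kappa_y$. The remaining ingredients — the ambient structure equations used to obtain $\gamma_v'''(0)$, and the fact that $A$ stays near $1$ for small $s$ (so $d_S$ is smooth in $s$ near $0$ and the $O(s^4)$ remainder is legitimate) — are routine.
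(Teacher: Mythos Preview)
Your proposal is correct and will produce the result, but it takes a genuinely different route from the paper. Both arguments Taylor–expand the geodesic $\gamma_v(s)$ in ambient coordinates and then extract the angle $\theta$ between $x-c$ and $y-c$; the difference is in \emph{which} trigonometric function of $\theta$ is computed. The paper introduces the auxiliary point $y_t$, the intersection of the line through $c$ and $y$ with the tangent hyperplane $T_xM$, and reads off
\[
\tan\theta=\frac{\|y_t-x\|}{r}=\frac{1}{r}\Bigl(s+\tfrac{\kappa_y}{2r}s^3-\tfrac{\kappa_y^2}{6}s^3\Bigr)+O(s^4),
\]
then inverts via $\arctan$. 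Because $\tan\theta\sim\theta$, the $s^3$ coefficient of $d_S=r\theta$ is determined by the $s^3$ coefficient of $\tan\theta$, so only the \emph{third}-order jet of $\gamma_v$ is needed and the one unknown coefficient at that order drops out without any explicit cancellation argument. Your route via $\cos\theta=A$ is more direct (it plugs straight into (\ref{eq:dS}) with no geometric construction), but since $\cos\theta=1-\tfrac12\theta^2+\cdots$ you must control $A$ through order $s^4$, which a priori drags in the \emph{fourth}-order jet of $\gamma_v$ through $p_4$ and forces you to check the algebraic cancellations you flag as the main obstacle. Those cancellations are real and exactly as you describe: in $A=N\,R^{-1/2}$ with $N=1-\kappa_0p$ and $R=1-2\kappa_0p+\kappa_0^2q$, the coefficients $p_3$ and $p_4$ enter $N$ and $R^{-1/2}$ with opposite weights, so $A_3=0$ and $A_4$ depends only on $q_4=-\kappa_y^2/12$ and on $\kappa_0,\kappa_y$; substituting into your $\arccos$ expansion yields $d_S=s-\tfrac16(\kappa_y-\kappa_0)(\kappa_y-2\kappa_0)s^3+O(s^4)$, matching the paper's computation in (\ref{eqn:sph_dist}). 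In short: the paper's $\tan\theta$ approach is a bit more economical (one less Taylor order, no cancellation bookkeeping), while yours is more mechanical and closer to the definition; both are valid.
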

In the worst case, the error has the same order as that for the Euclidean distance. However, there are multiple cases where the error is much smaller than the Euclidean one, shown in the following corollary.

\begin{corollary}	\label{cly:hypersurf}
	Under the same conditions in Theorem \ref{thm:hypersurf},
	\begin{enumerate}[(1)] 
		\item If $\kappa_y=\kappa_0$ or $\kappa_y = 2 \kappa_0$, then $d_S(x,y)=s+O(s^4)$.
		\item If $|\kappa_2(x)-\kappa_1(x)|<\bar r$, then $d_S(x,y)=s+O(\bar r s^3)$.
	\end{enumerate}
\end{corollary}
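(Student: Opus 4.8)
The plan is to read both statements off the cubic expansion furnished by Theorem~\ref{thm:hypersurf}, namely $d_S(x,y) = s + (\kappa_y-\kappa_0)(\kappa_y-2\kappa_0)s^3 + O(s^4)$, by controlling the scalar coefficient $(\kappa_y-\kappa_0)(\kappa_y-2\kappa_0)$ of the $s^3$ term. So after invoking Theorem~\ref{thm:hypersurf} there is essentially nothing left but bookkeeping.

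For part (1) I would simply note that if $\kappa_y=\kappa_0$ the factor $(\kappa_y-\kappa_0)$ vanishes, and if $\kappa_y=2\kappa_0$ the factor $(\kappa_y-2\kappa_0)$ vanishes; in either case the $s^3$ term disappears identically and the expansion collapses to $d_S(x,y)=s+O(s^4)$. For part (2) I would first observe that both $\kappa_y=\kappa_v(x)$ and $\kappa_0$ lie in the interval $[\kappa_1(x),\kappa_2(x)]$ — the former by the very definition of $\kappa_1(x),\kappa_2(x)$ as the infimum and supremum of the normal curvatures at $x$, the latter by the hypothesis $\kappa_0(x)\in[\kappa_1(x),\kappa_2(x)]$ of Theorem~\ref{thm:hypersurf} — so that $|\kappa_y-\kappa_0|\le \kappa_2(x)-\kappa_1(x)<\bar r$. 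Then I would rewrite the coefficient as $(\kappa_y-\kappa_0)(\kappa_y-2\kappa_0)=(\kappa_y-\kappa_0)^2-\kappa_0(\kappa_y-\kappa_0)$, giving $|(\kappa_y-\kappa_0)(\kappa_y-2\kappa_0)|\le \bar r^2+|\kappa_0|\,\bar r$. Since $M$ is compact its principal curvatures are uniformly bounded, say $|\kappa_0|\le K$, so for $\bar r$ bounded the coefficient is $O(\bar r)$; and since $y\in B_{\bar r}(x)$ we have $s=d_M(x,y)\le\bar r$, hence the remainder $O(s^4)$ is itself $O(\bar r s^3)$. Combining, $d_S(x,y)=s+O(\bar r s^3)$.

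I do not anticipate a genuine obstacle here, as the corollary is a direct consequence of the theorem it cites. The only mild point requiring care is the uniformity of the implied constants: one should make sure that the constant hidden in the $O(s^4)$ remainder of Theorem~\ref{thm:hypersurf}, as well as the bound $K$ on the principal curvatures, are uniform in $x$, which again follows from compactness of $M$. With that in place the estimates above are valid with a single constant, and both claims follow.
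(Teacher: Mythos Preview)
Your proposal is correct and follows essentially the same route as the paper's own proof: both parts are read off directly from the cubic expansion of Theorem~\ref{thm:hypersurf}, with part~(2) handled by bounding $|\kappa_y-\kappa_0|<\bar r$ and then $|(\kappa_y-\kappa_0)(\kappa_y-2\kappa_0)|\le \bar r(\bar r+|\kappa_0|)$. If anything, your write-up is slightly more careful than the paper's, which asserts the final step $\bar r(\bar r+|\kappa_0|)s^3+O(s^4)=O(\bar r s^3)$ without comment; you correctly supply the missing justifications that $|\kappa_0|$ is uniformly bounded by compactness and that $s\le\bar r$ forces $O(s^4)\subset O(\bar r s^3)$.
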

Assume $\kappa_0=\kappa_{v_0}$, then for all $y\in\{\exp_x(sv)\mid |\kappa_v-\kappa_0|\leq \bar r\}$, which is a neighborhood of the geodesic $\exp_x(tv_0)$, spherical estimation outperforms the Euclidean estimation. The closer to the central geodesic, the better the estimation performance. For a point $x$ where $\kappa_v(x)$ is not changing rapidly along different directions, the spherical estimation works well in the geodesic ball $B_{\bar r}(x)$.

Finally we consider the most general case: $M$ is a $d$ dimensional manifold embedded in $\RR^D$ for any $D>d$. Let $S_x(c,r)$ be a $d$ dimensional sphere whose tangent space is also $T_xM$. Letting $\pi$ be the projection to the sphere, the estimation error is given by the following theorem.
\begin{theorem}\label{thm:general}
	Fix $x\in M$, for $y=\exp_x(sv)$ such that $d_M(x,y)=s$, then the estimation error of spherical distance is given by 
	$$d_S(x,y)=s+O(s^3).$$ 
\end{theorem}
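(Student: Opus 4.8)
The plan is to reduce the estimate to the statement that projecting onto the sphere barely distorts the chord length measured from $x$, and then to invoke the exact chord/arc relation on a sphere. First I would set $w := y-x = \gamma(s)-\gamma(0)$ and Taylor expand the unit-speed geodesic $\gamma(s)=\exp_x(sv)$ as $w = s\,\gamma'(0) + \tfrac{s^2}{2}\gamma''(0) + O(s^3)$, observing that $\gamma'(0)=v\in T_xM$ and $\gamma''(0)\perp T_xM$, since the ambient acceleration of a geodesic of $M$ equals its second fundamental form $\mathrm{II}(v,v)$, which is normal to $M$. Because $S_x(c,r)$ has tangent space $T_xM$ at $x$, the sphere passes through $x$, so $\pi(x)=x$ exactly; moreover $\mathbf n := (x-c)/r$ is a unit vector orthogonal to $T_xM$. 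I would then let $V$ be the $(d+1)$-plane spanned by $T_xM$ and $\mathbf n$, with $P_V$ and $P_{V^\perp}$ the orthogonal projections onto $V$ and $V^\perp$, so that $\pi(y)=c+r\,P_V(y-c)/\|P_V(y-c)\|$ and $V^\perp\subseteq N_xM$.

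The heart of the argument will be the structural claim that $y-\pi(y) = N + T$, where $N\in N_xM$ with $N=O(s^2)$ and $T\in T_xM$ with $T=O(s^3)$; that is, the second-order deviation of $\gamma(s)$ from its sphere projection is purely normal, the tangential part being suppressed to third order. To establish it I would substitute $y-c = w + r\mathbf n$ into the projection formula, rewrite $y-\pi(y)=P_{V^\perp}(w) + P_V(y-c)\big(1-r/\|P_V(y-c)\|\big)$, and expand: the first summand lies in $V^\perp$ and is $O(s^2)$ because $P_{V^\perp}(sv)=0$; for the second, $1-r/\|P_V(y-c)\| = O(s^2)$ while $P_V(y-c)=sv + r\mathbf n + O(s^2)$, so the $\mathbf n$-component of the product is $O(s^2)$ but its $v$-component enters only at order $s^3$. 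The bookkeeping to watch is the splitting $\gamma''(0)=\langle\gamma''(0),\mathbf n\rangle\mathbf n + P_{V^\perp}\gamma''(0)$, both pieces being orthogonal to $T_xM$ along with $\mathbf n$.

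Granting this, I would compare $\|\pi(x)-\pi(y)\|=\|x-\pi(y)\|$ with the Euclidean distance via $\|a\|^2-\|b\|^2=\langle a-b,\,a+b\rangle$ taken with $a=x-\pi(y)$ and $b=x-y$: here $a-b=y-\pi(y)=N+T$ and $a+b=2x-\pi(y)-y = -2w + N + T = -2sv + (\text{normal }O(s^2)) + O(s^3)$. Pairing term by term, $\langle N,-2sv\rangle=0$ and $\langle N,\text{normal }O(s^2)\rangle=O(s^4)$, while $\langle T,-2sv\rangle=O(s^4)$ and $T$ is orthogonal to the normal part; hence $\|x-\pi(y)\|^2 = \|x-y\|^2 + O(s^4) = s^2 + O(s^4)$ by Proposition \ref{Euc_error}, so $\|\pi(x)-\pi(y)\| = s + O(s^3)$. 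To finish I would use the elementary spherical identity $d_S(x,y)=r\arccos\big(\tfrac{\pi(x)-c}{r}\cdot\tfrac{\pi(y)-c}{r}\big)=2r\arcsin\!\big(\|\pi(x)-\pi(y)\|/(2r)\big)$, which relates the great-circle arc length to the chord, and expand $\arcsin$, obtaining $d_S(x,y)=s+O(s^3)$.

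I expect the main obstacle to be precisely the structural claim of the second paragraph: extracting from the projection formula that $y-\pi(y)$ is normal to order $s^2$ with its tangential part pushed to order $s^3$. Once that is in hand, the orthogonality cancellation and the $\arcsin$ expansion are routine. Two remarks worth recording: the implied constants are uniform over $x\in M$ and $v\in UT_xM$ by compactness of $M$ and of $UT_xM$, exactly as in Theorem \ref{thm:hypersurf}; and this argument is where the requirement that the sphere pass through $x$ is essential — it is what makes $\pi(x)=x$, without which the term $x-\pi(x)$ would corrupt the estimate already at order $s$.
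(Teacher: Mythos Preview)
Your argument is correct and takes a genuinely different route from the paper's. The paper works through the tangent space $T_xM$ as an intermediary: it projects both $y$ and $\pi(y)$ orthogonally onto $T_xM$, obtaining $y_l=P_x(y)$ and $y_s=P_x(\pi(y))$, and then argues that (i) $d_M(x,y)=\|y_l-x\|+O(s^3)$ by direct Taylor expansion, (ii) $d_S(x,y)=\|y_s-x\|+O(s^3)$ by applying the same reasoning to $S$ (which shares the tangent space $T_xM$ at $x$), and (iii) $\|y_s-x\|=s+O(s^3)$ via the computation $\|y-c\|=r+O(s^2)$ together with the explicit formula $y_s=\frac{r}{\|y-c\|}\,y_l$. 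Your route bypasses the tangent-space intermediary entirely: you compare the spherical chord $\|\pi(x)-\pi(y)\|$ directly to the Euclidean chord $\|x-y\|$ through your structural decomposition $y-\pi(y)=N+T$, exploit the normal/tangential orthogonality to force an $O(s^4)$ difference in the squared chords, and then close with the exact chord--arc identity $d_S=2r\arcsin\!\big(\|\pi(x)-\pi(y)\|/(2r)\big)$ on the sphere. Your approach is arguably more self-contained: the paper's step (ii) implicitly needs an a priori bound $d_S(x,y)=O(s)$ so that the $O(d_S^3)$ remainder becomes $O(s^3)$, whereas your $\arcsin$ expansion handles the sphere side exactly and your structural claim packages the relevant cancellation cleanly as a standalone fact. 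The paper's approach, on the other hand, is more symmetric in $M$ and $S$ and makes transparent that the conclusion depends only on the two sharing the tangent space at $x$. Both arguments rest on the same geometric input --- that $\gamma''(0)\in N_xM$ --- and your structural claim is essentially the counterpart of the paper's key estimate $\|y-c\|=r+O(s^2)$.
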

Combining Theorem \ref{thm:curve}-\ref{thm:general}, we conclude that spherical estimation is at least the same as Euclidean estimation in terms of the error rate, and in many cases, the spherical estimation outperforms the Euclidean one.

\subsection{Global Error}
In this section we analyze the estimation error: $|d_{SG}(x,y)-d_M(x,y)|$ for any $x,\ y\in M$. The idea is to pass the local error bound to the global error bound. We use the same notation introduced in Section \ref{global_dist}. 
\begin{theorem}\label{thm:globalerror}
	Assume $M$ is a compact, geodesically convex submanifold embedded in $\RR^D$ and $\{x_i\}_{i=1}^n\subset M$ is a set of points, which are vertices of graph $G$. Introduce constants $\epsilon_{\min}>0$, $\epsilon_{\max}>0$, $0<\delta<\epsilon_{\min/4}$ and let $C$ be the constant such that $|d_S(x,y)-d_M(x,y)|\leq d_M(x,y)\{1+Cd^2_M(x,y)\}$ according to Theorem \ref{thm:general}. Suppose
	\begin{enumerate}
		\item $G$ contains all edges $xy$ with $\|x-y\|\leq \epsilon_{\min}$.
		\item All edges $xy$ in G have length $\|x-y\|\leq \epsilon_{\max}$.
		\item $\{x_i\}_{i=1}^n$ is a $\delta$-net of $M$, that is, for any $x\in M$, there exists $x_i$ such that $d_M(x,x_i)\leq \delta$.
	\end{enumerate}
	Then for any $x,y\in M$ 
	$$(1-\lambda_1)d_M(x,y)\leq d_{SG}(x,y)\leq (1+\lambda_2)d_M(x,y),$$
	where $\lambda_1 = C\epsilon_{\max}^2$ and $\lambda_2= \frac{4\delta}{\epsilon_{\min}}+C\epsilon_{\max}^2+\frac{4C\delta\epsilon^2_{\max}}{\epsilon_{\min}}$.
	
\end{theorem}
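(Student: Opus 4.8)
The plan is to run the same local-to-global passage that underlies existing Isomap consistency results (\cite{isomap2000,silva2003global}), but feeding in the cubic local comparison of Theorem~\ref{thm:general} in place of the Euclidean one. Concretely, for a graph edge $uv$ I would use the two-sided form $d_M(u,v)\{1-Cd_M(u,v)^2\}\le d_S(u,v)\le d_M(u,v)\{1+Cd_M(u,v)^2\}$, which follows from Theorem~\ref{thm:general} once one knows that every edge, having chordal length $\le\epsilon_{\max}$, also has geodesic length $\le\epsilon_{\max}$ (true for $\epsilon_{\max}$ below the scale where the chordal and geodesic metrics of the compact $M$ are comparable), so the expansion holds uniformly with a single constant $C$. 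I would then establish the two inequalities of the conclusion separately.

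For the lower bound I would take an arbitrary path $P=(x=x_{i_0},x_{i_1},\dots,x_{i_p}=y)$ in $G$, apply the local lower estimate edgewise to get $d_S(x_{i_l},x_{i_{l+1}})\ge(1-C\epsilon_{\max}^2)\,d_M(x_{i_l},x_{i_{l+1}})$, sum over $l$, and invoke the triangle inequality $\sum_l d_M(x_{i_l},x_{i_{l+1}})\ge d_M(x,y)$. Minimizing over $P$ then gives $d_{SG}(x,y)\ge(1-C\epsilon_{\max}^2)d_M(x,y)=(1-\lambda_1)d_M(x,y)$. This direction is essentially just the triangle inequality plus the one-sided local estimate and carries no real difficulty.

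For the upper bound I would build an explicit $G$-path shadowing a minimizing geodesic $\gamma$ from $x$ to $y$ (which exists by geodesic convexity of $M$). Place waypoints $p_0=x,p_1,\dots,p_p=y$ along $\gamma$ with $d_M(p_l,p_{l+1})=\epsilon_{\min}/2$ for $l<p-1$ and a shorter last step, so $p\le 2d_M(x,y)/\epsilon_{\min}+1$; then, since $\{x_i\}$ is a $\delta$-net, replace each interior $p_l$ by a vertex $x_{i_l}$ with $d_M(p_l,x_{i_l})\le\delta$ while keeping $x_{i_0}=x$, $x_{i_p}=y$. Because $\delta<\epsilon_{\min}/4$, the triangle inequality gives $\|x_{i_l}-x_{i_{l+1}}\|\le d_M(x_{i_l},x_{i_{l+1}})\le \epsilon_{\min}/2+2\delta\le\epsilon_{\min}$, so by the first hypothesis on $G$ every such edge is present and the sequence is a genuine path from $x$ to $y$. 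Applying the local upper estimate edgewise and summing,
$$d_{SG}(x,y)\le\sum_{l=0}^{p-1}d_S(x_{i_l},x_{i_{l+1}})\le(1+C\epsilon_{\max}^2)\sum_{l=0}^{p-1}d_M(x_{i_l},x_{i_{l+1}}),$$
and finally $\sum_l d_M(x_{i_l},x_{i_{l+1}})\le\sum_l\{d_M(p_l,p_{l+1})+2\delta\}=d_M(x,y)+2p\delta\le(1+4\delta/\epsilon_{\min})d_M(x,y)$ after inserting the bound on $p$. Multiplying $(1+C\epsilon_{\max}^2)(1+4\delta/\epsilon_{\min})$ produces exactly $1+\lambda_2$ with the stated $\lambda_2$.

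The main obstacle is the bookkeeping in this last step: one must choose the waypoint spacing so that the perturbed vertices stay within $\epsilon_{\min}$ of one another — this is precisely where the hypothesis $\delta<\epsilon_{\min}/4$ is consumed — and then show that the accumulated $2\delta$ perturbations, together with the count $p$, collapse into exactly the factor $1+4\delta/\epsilon_{\min}$, which needs a careful treatment of the final short segment and of the endpoints (which require no perturbation). A secondary technical point is the uniformity of Theorem~\ref{thm:general}'s expansion along all graph edges: one needs that chordal length $\le\epsilon_{\max}$ forces the edge into a region (e.g.\ below the injectivity radius) on which the remainder is $O(\epsilon_{\max}^3)$ with a single constant and on which $d_M$ is itself $O(\epsilon_{\max})$; both follow from compactness and geodesic convexity of $M$. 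Everything else is routine algebra.
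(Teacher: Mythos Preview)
Your proposal is correct and follows essentially the same local-to-global strategy as the paper: bound $d_{SG}$ against the graph geodesic distance $d_G(x,y)=\min_P\sum_l d_M(x_{i_l},x_{i_{l+1}})$ using the edgewise estimate from Theorem~\ref{thm:general}, then compare $d_G$ with $d_M$. The only difference is that for the upper bound $d_G(x,y)\le(1+4\delta/\epsilon_{\min})d_M(x,y)$ the paper simply cites Theorem~2 of \cite{geodist2000}, whereas you reproduce that argument explicitly via the waypoint-and-perturb construction; your version is thus more self-contained, and your remarks about the endpoint/short-segment bookkeeping and about the uniform $O(\epsilon_{\max}^3)$ control on edges are exactly the points one needs to make the constants come out as stated.
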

As the sample size grows to infinity, $\delta, \epsilon_{\min}, \epsilon_{\max}\rightarrow 0$ and we can carefully choose the size of the neighborhood so that $\delta/\epsilon_{\min}\rightarrow 0$. As a result, $\lambda_1, \lambda_2\rightarrow 0$ so $d_S(x,y)\rightarrow d_M(x,y)$ uniformly.

\section{Simulation Studies}
\subsection{Euler Spiral}
We test the theoretical results on generated data from manifolds in which the geodesic distance is known so that we can calculate the error. The first example we consider is the Euler spiral, a curve in $\RR^2$. The Cartesian coordinates are given by Fresnel integrals: $\gamma(s)=\{x(s),y(s)\}$ where 
$$x(s)=\int_0^s\cos(t^2)\mathrm{dt},\quad y(s)=\int_0^s\sin(t^2)\mathrm{dt}.$$
The main feature of the Euler spiral is that the curvature grows linearly, that is, $\kappa(s)=s$. We generate $500$ points uniformly on $[0,2]$. Then we fix $x=\gamma(1.6)$ and choose $\bar{r}=0.04$, so there are $20$ points falling inside the geodesic ball $B_{\bar{r}}(x)$, denoted by $y_1,\cdots,y_{20}$. Then we can calculate the Euclidean $\|y_i-x\|$ and the spherical distance $d_S(x,y_i)$. 

\begin{figure}[h]
	\centering
	\begin{minipage}{0.3\textwidth}
		\centering
		\includegraphics[width=1\textwidth, height=0.2\textheight]{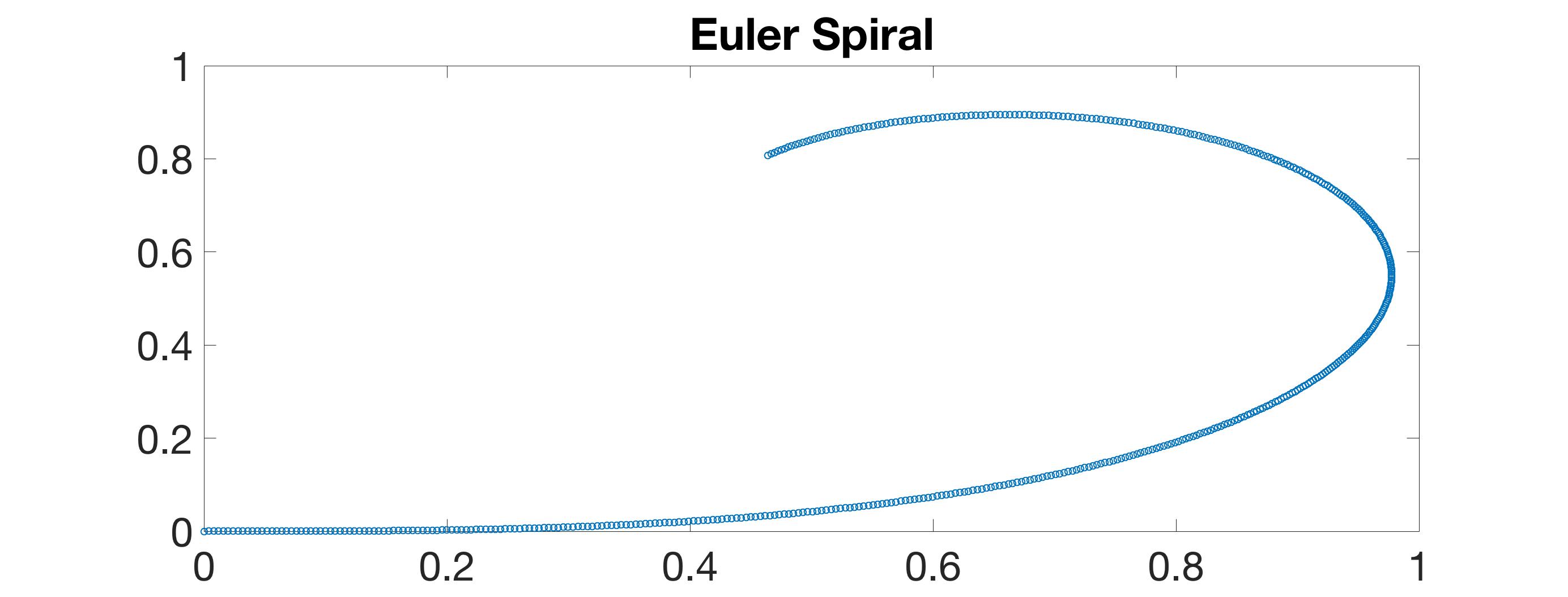}
		\subcaption[Linearly separable classes.]{Euler spiral}\label{fig:1a}
	\end{minipage}%
	\begin{minipage}{0.4\textwidth}
		\centering
		\includegraphics[width=1\textwidth,height=0.2\textheight]{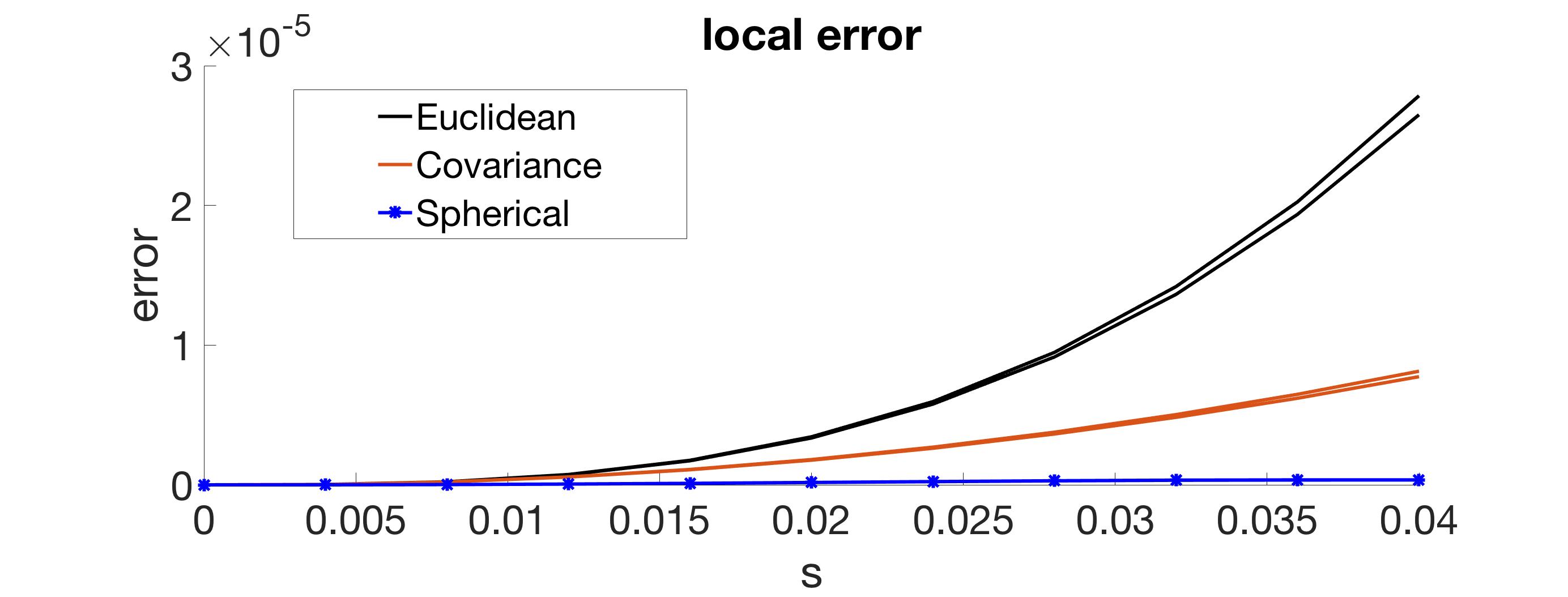}
		\subcaption[Linearly inseparable classes.]{Plot for local error.}\label{fig:1b}
	\end{minipage}%
	\begin{minipage}{0.4\textwidth}
		\centering
		\includegraphics[width=1\textwidth,height=0.2\textheight]{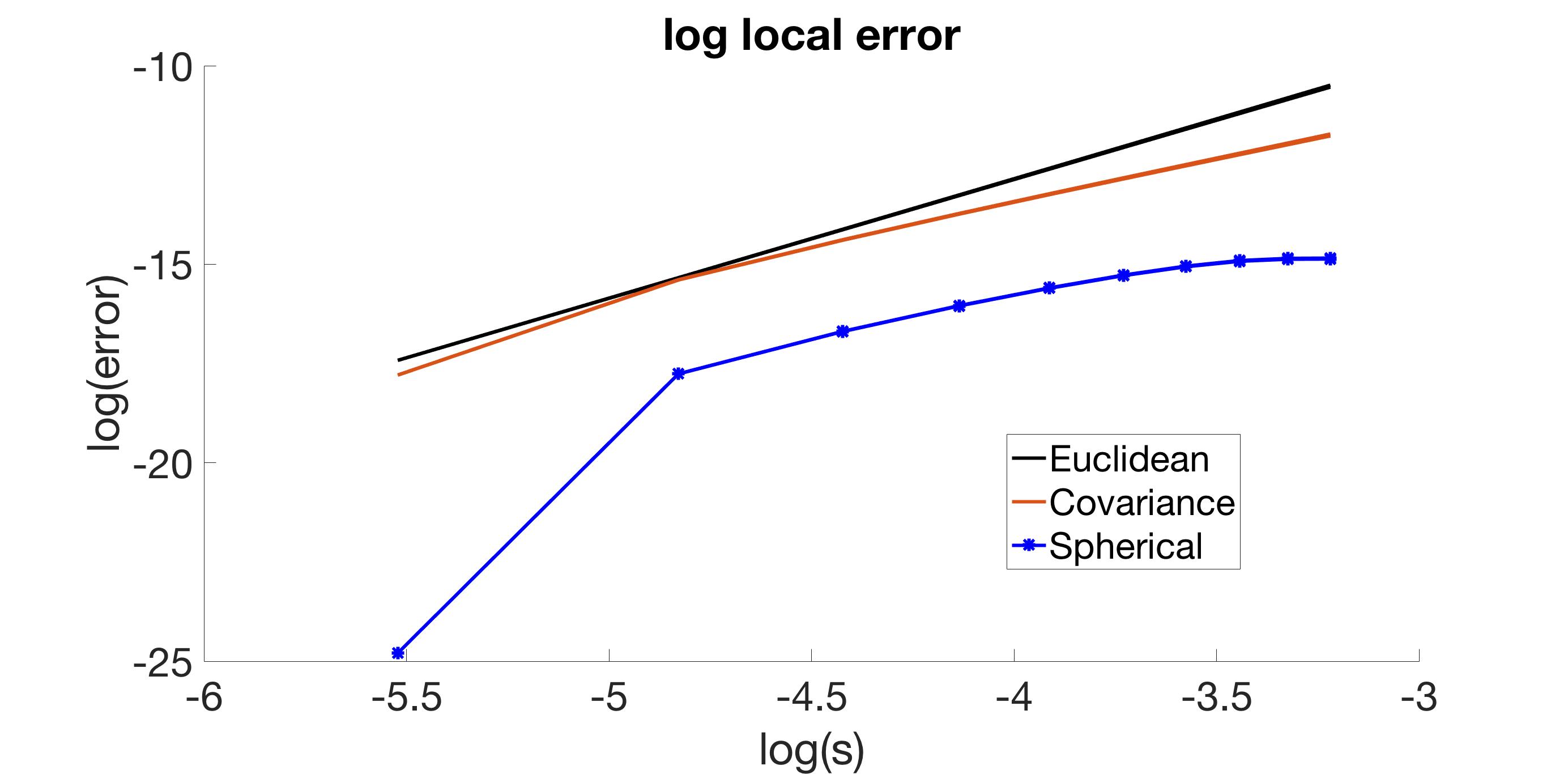}
		\subcaption[Linearly inseparable classes.]{$\log$ plot for local error }\label{fig:1c}
	\end{minipage}%
	\caption{Local error for Euler spiral} \label{fig:EulerSpiral}
\end{figure}

The covariance-corrected geodesic distance estimator (\cite{malik2019connecting}) can be viewed as the state-of-the-art. We compare the spherical distance with both Euclidean distance and the covariance-corrected distance. Figure \ref{fig:1a} is the spiral and Figure \ref{fig:1b} contains the error plot for the three algorithms. To visualize the rate, we also present the $\log-\log$ plot in Figure \ref{fig:1c}. The results match our theory and the spherical estimator has the smallest error among these three algorithms.

Then we consider the global error. By the definition of arc length parameter, the pairwise geodesic distance matrix is given by $GD_{ij}=|s_i-s_j|$. Denote the Euclidean pairwise distance matrix by $D$, the graph distance based on Euclidean distance, covariance-corrected distance and spherical distance by $EG$, $CG$ and $SG$, respectively. As the most natural measurement of the global error, we calculate and compare the following norms:
$$\|GD-D\|,\quad \|GD-EG\|,\quad \|GD-CG\|, \quad\|GD-SG\|.$$
Table \ref{table:error} shows the global error when the total sample size is $500$ and $k$ is chosen to be $3$. Furthermore, we vary the curvature  from $[0,1]$ to $[3,4]$ to assess the influence of curvature on these estimators. 
\begin{table}[h!]
	\centering
	\def~{\hphantom{0}}
	\caption{Global error for Euler spiral}
	\begin{tabular}{ccccc}
		Curvature& D& EG & CG & SG\\
		$[0,1]$ & 3.0829 & 1.4708e-04 & 3.2701e-05 & \bf{3.2291e-07}\\
		$[1,2]$ & 23.9957 & 1.0699e-03 & 2.3807e-04 & \bf{5.5456e-07} \\
		$[2,3]$ & 58.8129 & 2.917e-03 & 6.4931e-04 & \bf{9.2362e-07} \\
		$[3,4]$ & 95.2806 & 5.6887e-03 & 1.2665e-03& \bf{1.2929e-06}
	\end{tabular}
	\label{table:error}
\end{table}

The global Euclidean distance is by far the worst and the graph spherical distance is the best in all cases. Furthermore, as the curvature increases, the spherical error increases the most slowly. This matches the theoretical analysis, since the spherical estimator takes the curvature into consideration. 

In real applications, almost all data contain measurement error, so the data may not exactly lie on some manifold, but instead may just concentrate around the manifold with certain noise. The robustness of the algorithm with respect to the noise is a crucial feature. To assess this, we generate samples from the Euler spiral and add Gaussian noise $\epsilon_i\sim N(0,\sigma^2 Id_D)$ where $\sigma$ is the noise level. In this setting the local error is not very meaningful since $x_i$ is no longer on the manifold. However, the global error is still informative since the pairwise distance matrix contains much information about the intrinsic geometry of the manifold. Since the ground truth $d_M(x_i,x_j)$ is not well defined, we firstly apply the Graph Euclidean distance to a large data set, and treat these results as ground truth $GD$. The reason is that when the sample size is large enough, all the above global estimators converge to the true distance except for $D$. Then we subsample a smaller dataset and apply these global estimators to obtain $EG$, $CG$ and $SG$ and compute the error. We test on different subsample sizes to assess the stability of the algorithms and the performance on small data sets. 

\begin{figure}[htbp]
	\centering
	\begin{minipage}{0.4\textwidth}
		\centering
		\includegraphics[width=1\textwidth, height=0.2\textheight]{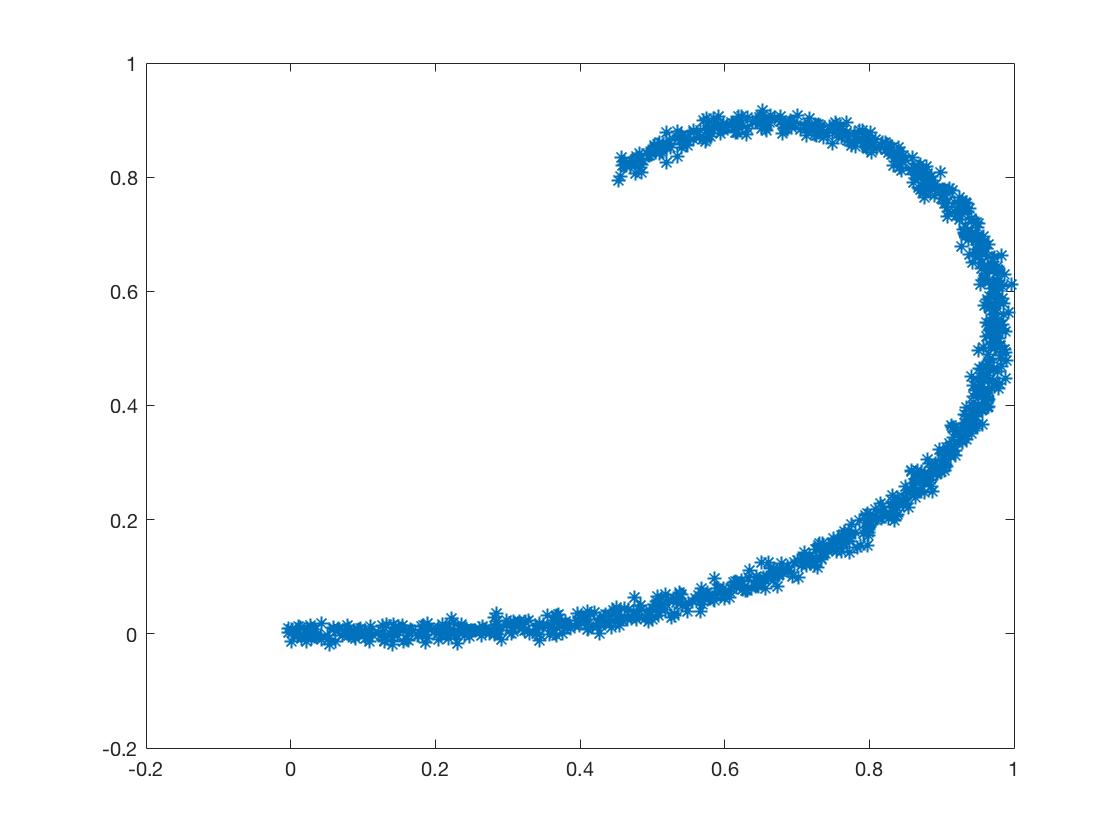}
		\subcaption[Noisy Euler Spiral.]{Noisy Euler Spiral.}\label{fig:2a}
	\end{minipage}%
	\begin{minipage}{0.6\textwidth}
		\centering
		\includegraphics[width=1\textwidth,height=0.2\textheight]{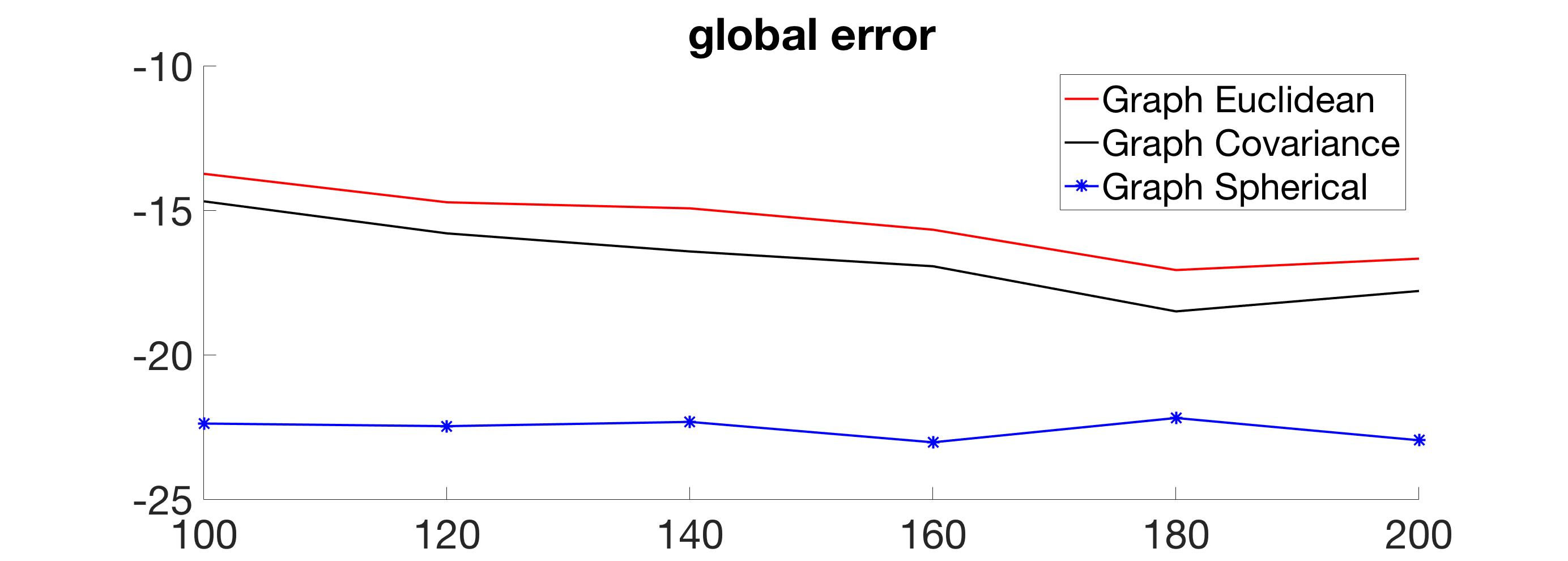}
		\subcaption[sample size vs. log global error plot]{sample size vs. log(global error) plot}\label{fig:2b}
	\end{minipage}%
	\caption{Global error for noisy Euler spiral} \label{fig:NoisyEulerSpiral}
\end{figure}

\vspace{-0.3cm}

Figure \ref{fig:NoisyEulerSpiral} shows that spherical estimation works well on very small data sets, because it efficiently captures the geometry  hidden in the data. 
\subsection{Torus}
We also consider the torus, a two dimensional surface with curvature ranging from negative to positive depending on the location. We set the major radius to be $R = 5$ and the minor radius to be $r=1$ so the equation for the torus is 
$$\left\{x(\theta,\varphi), y(\theta,\varphi),z(\theta,\varphi)\right\}=\left\{(R+r\cos\theta)\cos\varphi,(R+r\cos\theta)\sin\varphi, r\sin\theta\right\}.$$ 

Since the geodesic distance on the torus does not admit an analytic form, we apply the same strategy as in the noisy Euler Spiral case. First we generate a large dataset and apply the Graph Euclidean method to obtain the ``truth'', then estimate the distance through a subset and finally compute the error. Similarly, we also consider the noisy case by adding Gaussian noise to the torus data. The results are shown in Figure \ref{fig:Torus}, which demonstrates that the performance of the spherical estimation is the best for both clean and noisy data.

\begin{figure}[h]
	\centering
	\begin{minipage}{0.35\textwidth}
		\centering
		\includegraphics[width=1\textwidth, height=0.2\textheight]{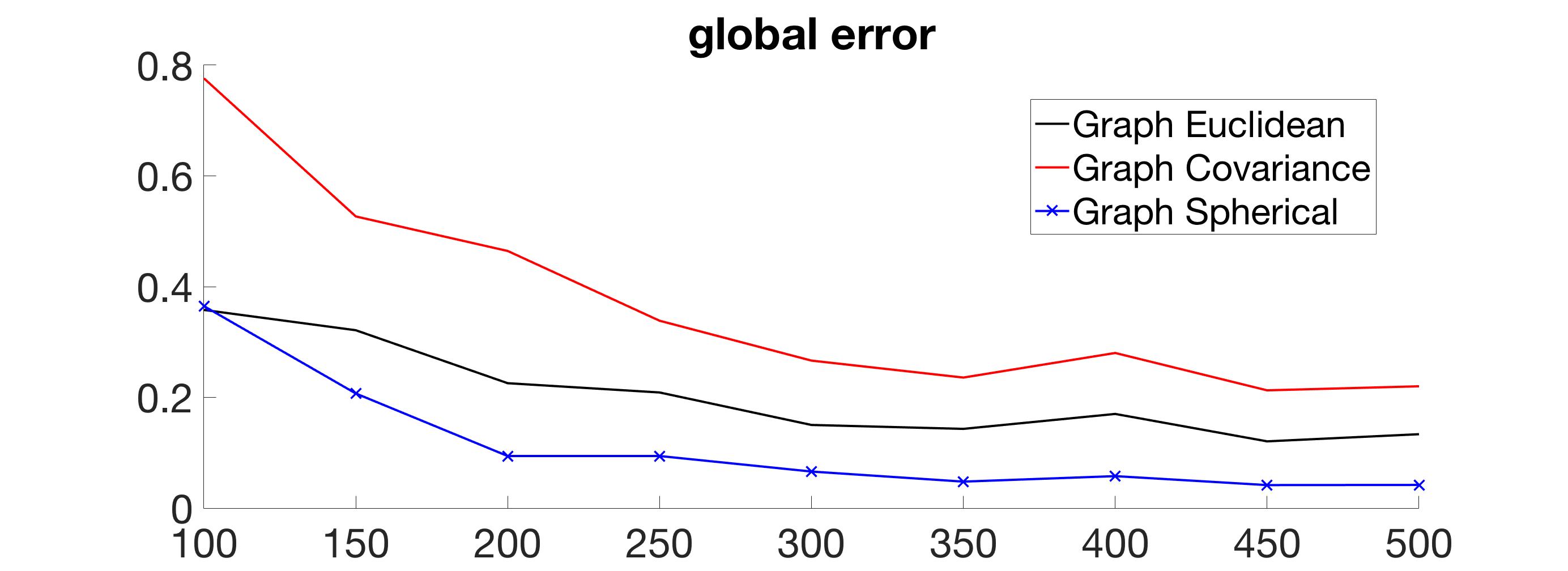}
		\subcaption[Torus]{sample size vs. global error plot for torus}\label{fig:3a}
	\end{minipage}%
	\begin{minipage}{0.25\textwidth}
		\centering
		\includegraphics[width=1\textwidth,height=0.2\textheight]{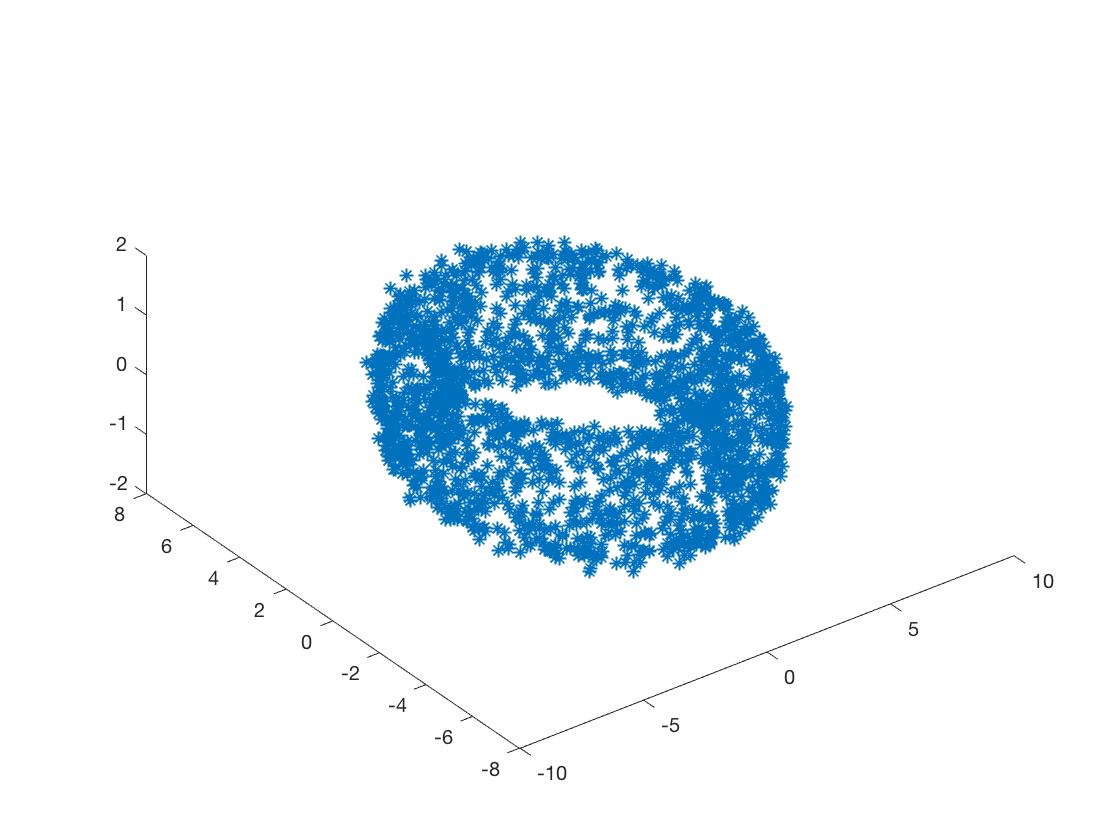}
		\subcaption[Noisy Torus]{Noisy torus}\label{fig:3b}
	\end{minipage}%
	\begin{minipage}{0.35\textwidth}
		\centering
		\includegraphics[width=1\textwidth,height=0.2\textheight]{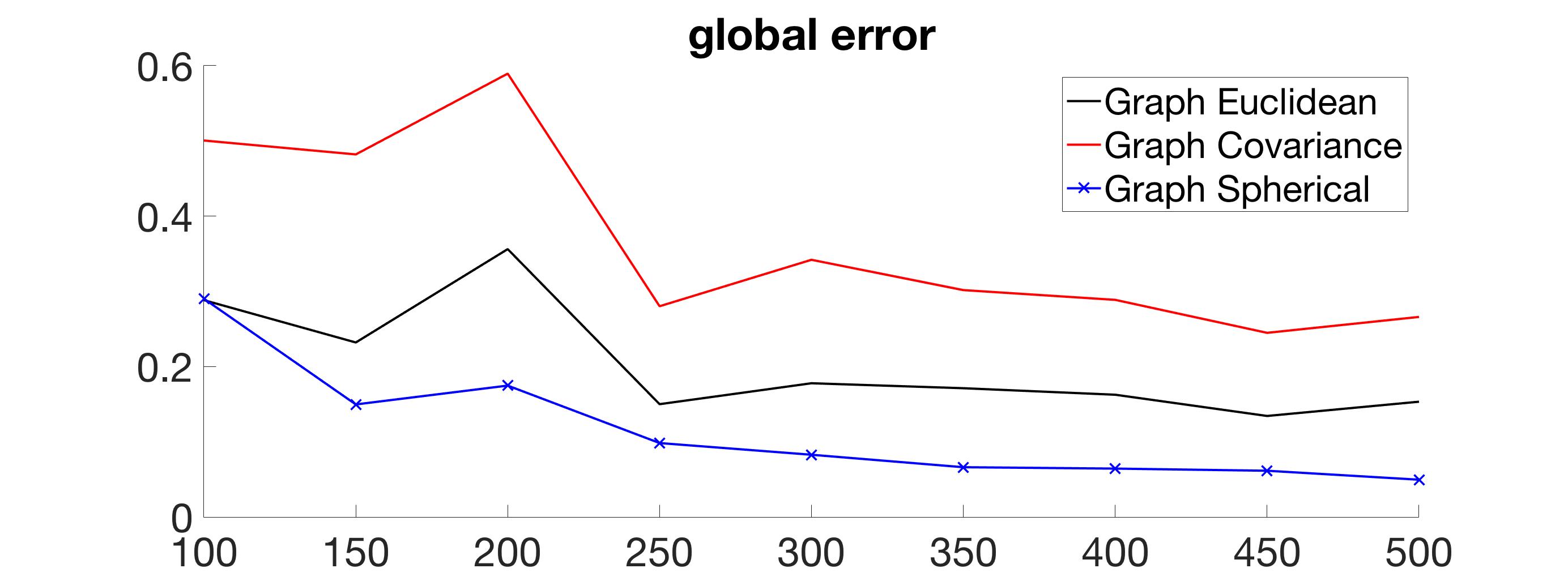}
		\subcaption[sample size vs. log global error plot]{sample size vs. global error  plot for noisy torus}\label{fig:3b}
	\end{minipage}%
	\caption{Global error for (noisy) torus} \label{fig:Torus}
\end{figure}

\section{Applications}
In this section we consider three applications of geodesic distance estimation: clustering, conditional density estimation and regression.
\subsection{$k$-Smedoids clustering}\label{subsec:clustering}
Among the most popular algorithms for clustering, $k$-medoids (introduced in \cite{kaufman1987clustering}) takes the pairwise distance matrix as the input; refer to Algorithm \ref{alg:kmedoids} (\cite{park2009simple}). Similar to $k$-means, $k$-medoids also aims to minimize the distance between the points in each group and the group centers. Differently from $k$-means, the centers are chosen from the data points instead of arbitrary points in the ambient space.  

\begin{algorithm}[!h]
	\caption{$k$-medoids} 
		\label{alg:kmedoids}
	\SetKwData{Left}{left}\SetKwData{This}{this}\SetKwData{Up}{up}
	\SetKwFunction{Union}{Union}\SetKwFunction{FindCompress}{FindCompress}
	\SetKwInOut{Input}{input}\SetKwInOut{Output}{output}
	\Input{Data $\{x_i\}_{i=1}^n\subset \RR^D$, pairwise distance matrix $D\in\RR^{n\times n}$, the number of clusters $K$}
	\Output{Clustered data with labels $\{l_i\}$}
	\BlankLine
	\emph{Initialize: randomly select $K$ points $\{x_{i_j}\}_{j=1}^K\subset \{x_i\}_{i=1}^n$ as medoids}\;
	\emph{Assign $x_i$ label $l_i = \arg\min_{j} D(i_j,i)$}\;
	\emph{Calculate the $cost = \sum_{i} D(i,i_{l_i})$}\;
	\While{the cost decreases}{
		\emph{Swap current medoids and non-medoids}\;
		\emph{Update the label}\;
		\emph{Update the cost}\;
		\If{the cost does not decrease}{
			\emph{Undo the swap}\;
		}
	}
\end{algorithm}

In most packages, the default pairwise distance matrix is the global Euclidean distance $D$, which is inaccurate if the support of the data has essential curvature. As a result, we replace $D$ by $SG$ and call the new algorithm $k$-Smedoids. By estimating $GD$ better, it is reasonable to expect that $k$-Smedoids has better performance.


We present two types of examples: unlabeled (example 1) and labeled data (example 2 and 3). For the unlabeled data, we visualize the clusters to show the performance of different algorithms, for the labeled datasets, we can make use of the labels and do quantitative comparisons. Among clustering performance evaluation metrics, we choose the following: Adjusted Rand Index (ARI, \cite{hubert1985comparing}),  Mutual Information Based Scores (MIBS, \cite{strehl2002cluster}, \cite{vinh2009information}), HOMogeneity (HOM), COMpleteness (COM), V-Measure (VM, \cite{rosenberg2007v}) and Fowlkes-Mallows Scores (FMS, \cite{fowlkes1983method}). We compare these scores for standard $k$-medoids, $k$-Emedoids and our $k$-Smedoids. These algorithms are based on different pairwise distance matrices while other steps are exactly the same, so the performance will illustrate the gain from the estimation of the geodesic distance. We note that for all above metrics, larger values reflect better clustering performance.

Regarding the tuning parameters, depending on the specific problem, $d$ and $k$ can be tuned accordingly. In example 1, the data are visualizable so $d=1$ is known and $k$ can be tuned by the clustering performance: whether the two circles are separated. For example 2-3, cross validation can be applied to tune the parameters based on the six scores. In any case with quantitative scores, cross validation can be used to tune the parameters mentioned above. Our recommended default choices of $k$ are uniformly distributed integers between $d+2$ and $\frac{n}{2}$, proportion to $\sqrt{n}$. Estimating the manifold dimension $d$ has been proven to be a very hard problem, both practically and theoretically. There are some existing methods to estimate $d$, see \cite{granata2016accurate}, \cite{levina2005maximum}, \cite{kegl2003intrinsic}, \cite{camastra2002estimating}, \cite{carter2009local}, \cite{hein2005intrinsic}, \cite{camastra2001intrinsic} and \cite{fan2009intrinsic}, and we can apply these algorithms directly. 
\begin{figure}[h]
	\centering
	\includegraphics[width=0.6\textwidth, height=0.4\textheight]{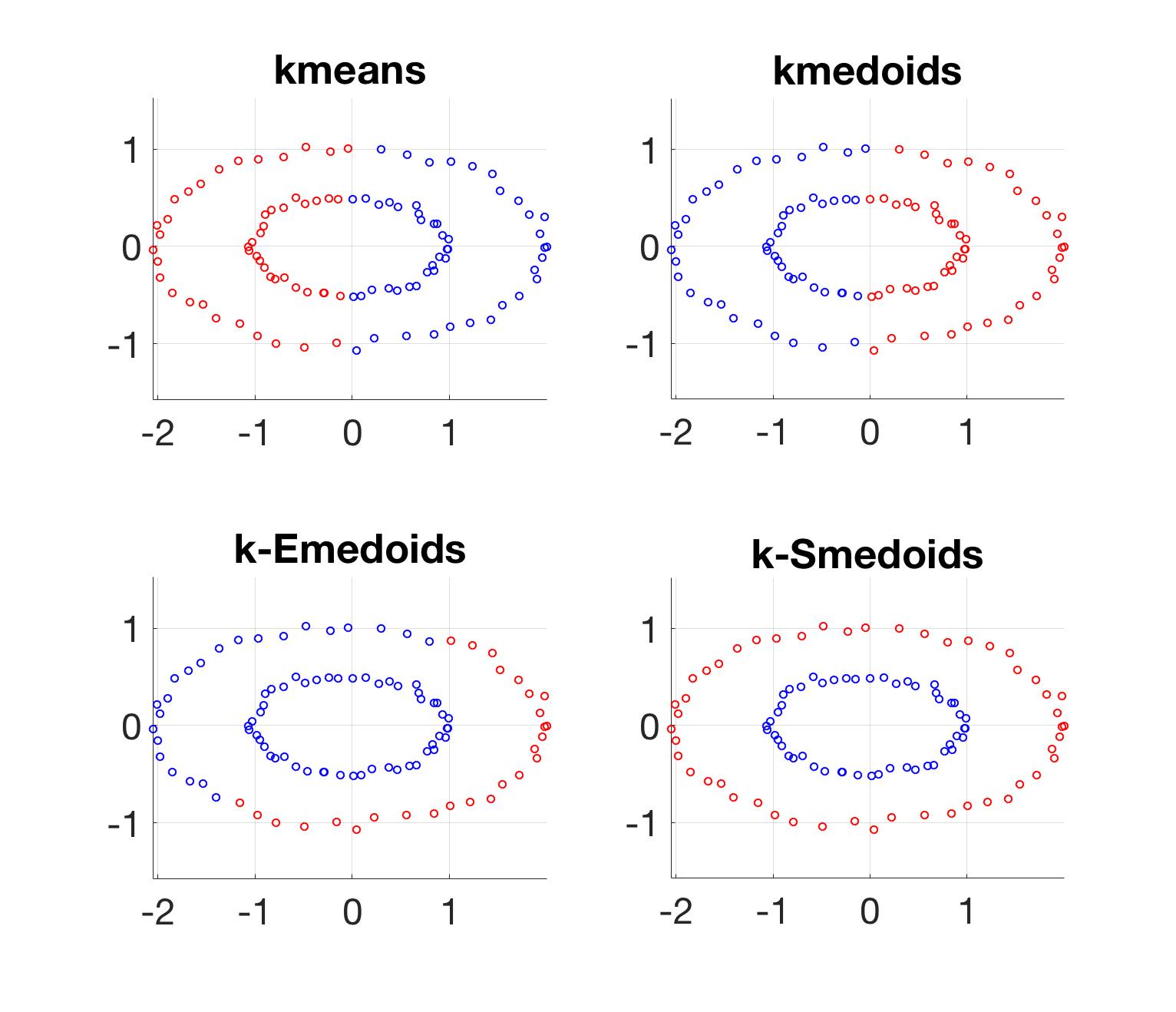}
	\vspace{-0.5cm}
	\caption{Clustering performance for a two ellipses example} \label{fig:twoellipses}
\end{figure}
\noindent{\bf Example 1: Two ellipses.}\label{example1}

We randomly generate $100$ samples from two concentric ellipses with  eccentricity $\sqrt{3}/2$ added by  zero mean Gaussian noise. We compare with $k$-means, standard $k$-medoids and $k$-Emedoids. Figure \ref{fig:twoellipses} shows the clustering results for the two ellipses data. In this example, we set $K=2$,  $k=3$ and $d=1$ since the support is a curve with dimension $1$.

Since the two groups are disconnected and curved, the Euclidean-based algorithms fail while the spherical algorithm works better than using other geodesic distance estimators. We also consider two real datasets with labels.

\noindent{\bf Example 2: Banknote.}

The Banknote data set is introduced in \cite{Banknote}. There are $D=4$ features, characterizing the images from genuine and forged banknote-like specimens and the sample size is $1372$. The binary label indicates whether the banknote specimen is genuine or forged.

Table \ref{table:Banknote} shows the clustering performance of three algorithms for the Banknote data. We can see that $k$-Smedoids has the highest score for all 6 metrics. In this example, $K=2$ is known, and we set $k=4$ and $d=1$. The choice of $d$ and $k$ are determined by cross validation.

\begin{table}[h]
	\begin{minipage}[]{0.5\textwidth}
		\def~{\hphantom{0}}
		\caption {{\small Clustering performance for Banknote}}
		\begin{tabular}{cccc}
			& $k$-medoids& $k$-Emedoids & $k$-Smedoids\\
			ARI & 0.059 &0.004 &{\bf 0.452}\\
			MIBS &  0.041& 0.008 &{\bf0.439}\\
			HOM &0.0416 &0.009& {\bf0.439} \\
			COM & 0.041& 0.138& {\bf0.508}\\
			VM & 0.0415& 0.0163& {\bf0.471}\\
			FMS &0.533& 0.707 &{\bf0.754}
		\end{tabular}
		\label{table:Banknote}
	\end{minipage}
	\begin{minipage}{0.5\textwidth}
		\def~{\hphantom{0}}
		\caption{{\small Clustering performance for Galaxy Zoo}}
		\begin{tabular}{cccc}
			& $k$-medoids& $k$-Emedoids & $k$-Smedoids\\
			ARI & 0.744& 0.805& {\bf0.954}\\
			MIBS &  0.6402 &0.702& {\bf0.900}\\
			HOM &0.712 &0.763& {\bf0.919}\\
			COM & 0.641 & 0.702& {\bf0.900}\\
			VM &  0.674& 0.731 &{\bf0.909}\\
			FMS &0.899 &0.923 &{\bf0.983}
		\end{tabular}
		\label{table:Galaxy}
		
	\end{minipage}
\end{table}

\noindent{\bf Example 3: Galaxy zoo data.}

The last example is from the Galaxy Zoo project available at {\it http://zoo1.galaxyzoo.org}. The features are the fraction of the vote from experts in each of the six categories, and the labels represent whether the galaxy is spiral or elliptical. We randomly choose $1000$ samples from the huge data set. 

Table \ref{table:Galaxy} shows the clustering performance of three algorithms for the Galaxy zoo data. We can see that $k$-Smedoids has the highest score for all 6 metrics. In this example $K=2$, and the parameters $k=6$, $d=1$ are determined by cross validation.   

\subsection{Geodesic conditional density estimation}\label{sec:CDE}
Conditional density estimation aims to estimate $f(y|x)$ based on observations $\{(x_i,y_i)\}_{i=1}^n$ where $x_i\in\RR^D$are predictors and $y_i\in\RR$ are responses. The most popular conditional density estimator involving pairwise distance is the kernel density estimator (KDE) with Gaussian kernel \citep{davis2011remarks}:
\begin{equation}\label{eqn:CKDE}
\hat f(y|x) = \frac{1}{\sqrt{\pi h_2}}\frac{\sum_{i=1}^n e^{-\|x_i-x\|^2/h_1}e^{-(y_i-y)^2/h_2}}{\sum_{i=1}^n e^{-\|x_i-x\|^2/h_1}},
\end{equation}
where $h_1$ and $h_2$ are bandwidths. This method is motivated by the formula $f(y|x) = \frac{f(x,y)}{f(x)}$, using kernel density estimation in both the numerator and the denominator.

As discussed before, if the data have essential curvature, Euclidean distance can't capture the intrinsic structure in the data. Instead, we can improve the performance by replacing the Euclidean distance by geodesic distance. That is, the natural estimator is 

\begin{equation}\label{eqn:GCKDE}
\hat f(y|x) = \frac{1}{\sqrt{\pi h_2}}\frac{\sum_{i=1}^n e^{-d(x_i,x)^2/h_1}e^{-(y_i-y)^2/h_2}}{\sum_{i=1}^n e^{-d(x_i,x)^2/h_1}},
\end{equation}

\noindent where $d$ is the (estimated) geodesic distance. The kernel $e^{-d(x_i,x)^2/h}$ corresponds to the Riemannian Gaussian distribution \citep{said2017riemannian}.

In terms of the distance, we have four pairwise distances between training data: global Euclidean distance $D$, graph Euclidean distance $ID$, graph covariance corrected distance $CD$ and our proposed graph spherical distance $SD$. For any given $x$, $d(x,x_i)$ is obtained by interpolation. First we add $x$ to the graph consists of all training data and connect $x$ with its neighbors. Then we calculate the graph distance between $x$ and $x_i$. This is more efficient than calculating pairwise distances between all samples $X_{train}\cup X_{test}$. The algorithm is formulated in Algorithm \ref{alg:CKDE}:

\begin{algorithm}[!h]
	\caption{Geodesic conditional kernel density estimation algorithm} 
		\label{alg:CKDE}
	\SetKwData{Left}{left}\SetKwData{This}{this}\SetKwData{Up}{up}
	\SetKwFunction{Union}{Union}\SetKwFunction{FindCompress}{FindCompress}
	\SetKwInOut{Input}{input}\SetKwInOut{Output}{output}
	\Input{Training data $\{x_i,y_i\}_{i=1}^n\subset \RR^D\times \RR$, tuning parameters $k$, $d$, $h_1$, $h_2$, given predictor $x$.}
	\Output{Estimated conditional density $\hat f(y|x)$.}
	\BlankLine
	\emph{Estimate pairwise geodesic distance between $x_i$'s $SD$ by Algorithm \ref{alg:global}}\;
	\emph{Calculate $d(x,x_i)$ for neighbors of $x$}\;
	\emph{$d(x,x_i)=d(x,x_{i_0})+SD(i_0,i)$ where $x_{i_0}$ is the closest neighbor of $x$}\;
	\emph{$\hat f(y|x) = \frac{1}{\sqrt{\pi h_2}}\frac{\sum_{i=1}^n e^{-d(x_i,x)^2/h_1}e^{-(y_i-y)^2/h_2}}{\sum_{i=1}^n e^{-d(x_i,x)^2/h_1}}$}.

\end{algorithm}

We can replace the distance estimator in the first step by any other algorithm to obtain the corresponding version of conditional density estimation. To compare the performance, we estimate the conditional density through training data $X_{train}, Y_{train}$ and calculate the sum of log likelihood $\sum_{i=1}^{n_{test}} \log(\hat f(y_i|x_i))$. We randomly permute the data to obtain different training and test sets and provide boxplots for the sum of log likelihoods. 

Regarding the tuning parameters, $k$ and $d$ have been discussed in Section \ref{subsec:clustering}. Regarding bandwidths $h_1$ and $h_2$, there is a very rich literature on choosing optimal bandwidths in other contexts. For simplicity we use cross validation to estimate $h_1$ and $h_2$. We consider the following two real data sets.

\noindent{\bf Example 4: Combined Cycle Power Plant.}
This data set is introduced in \cite{tufekci2014prediction,kaya2012local}, containing $9568$ samples collected from a Combined Cycle Power Plant from 2006 to 2011. There are $4$ predictors: hourly average ambient variables Temperature (T), Ambient Pressure (AP), Relative Humidity (RH) and Exhaust Vacuum (V), to predict the net hourly electrical energy output (EP) of the plant (response). We randomly sample $1000$ data points and repeat $100$ times to obtain the following boxplots for the sum of log likelihood scores for different distance estimation methods, as shown in Figure \ref{fig:PowerPlant}. There is a clear improvement for our graph spherical approach.

\begin{figure}[h]
	\centering
	\includegraphics[width=1\textwidth, height=0.3\textheight]{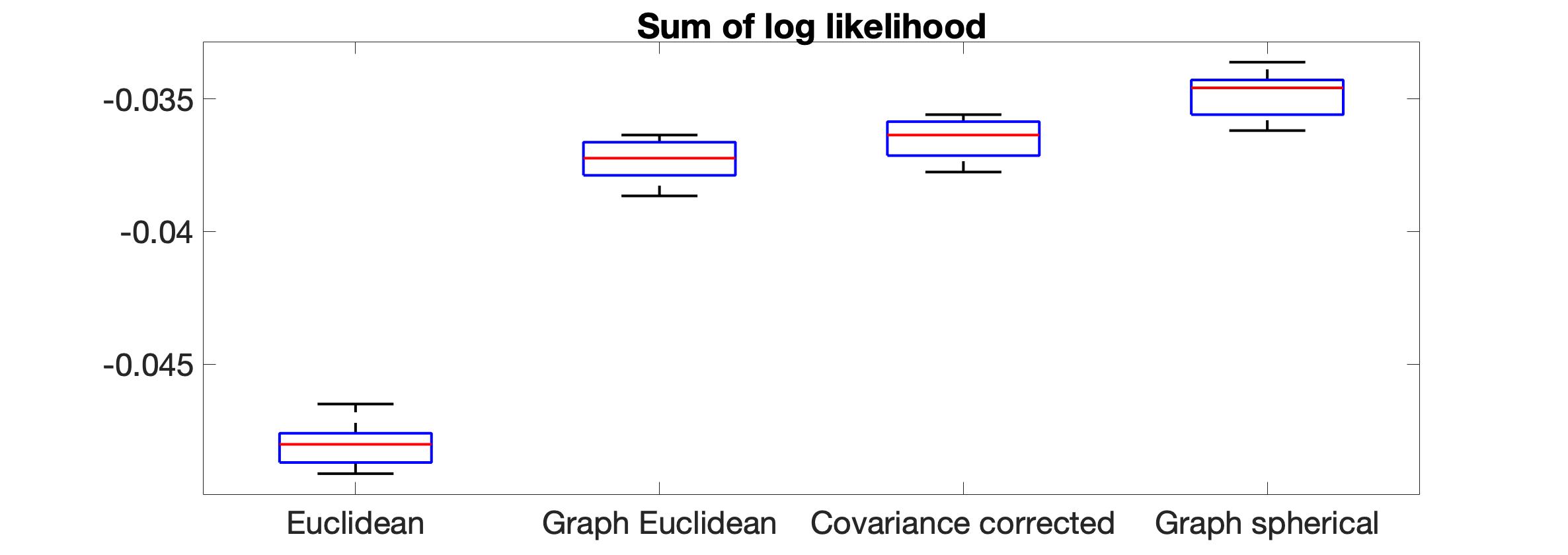}
	\vspace{-0.5cm}
	\caption{Sum of log likelihood for Combined Cycle Power Plant data set} \label{fig:PowerPlant}
\end{figure}

\noindent{\bf Example 5: Concrete Compressive Strength.}
This data set is introduced in \cite{yeh1998modeling}, containing $1030$ samples with $8$ predictors: cement, blast furnace slag, fly ash, water, superplasticizer, coarse aggregate, fine aggregate and age, to predict the concrete compressive strength. We randomly split the data $50-50$ as training and test data and repeat for $100$ times to obtain the boxplots for the sum of log likelihood scores for different distance estimation methods, as shown in Figure \ref{fig:Concrete}.

\begin{figure}[h]
	\centering
	\includegraphics[width=1\textwidth, height=0.3\textheight]{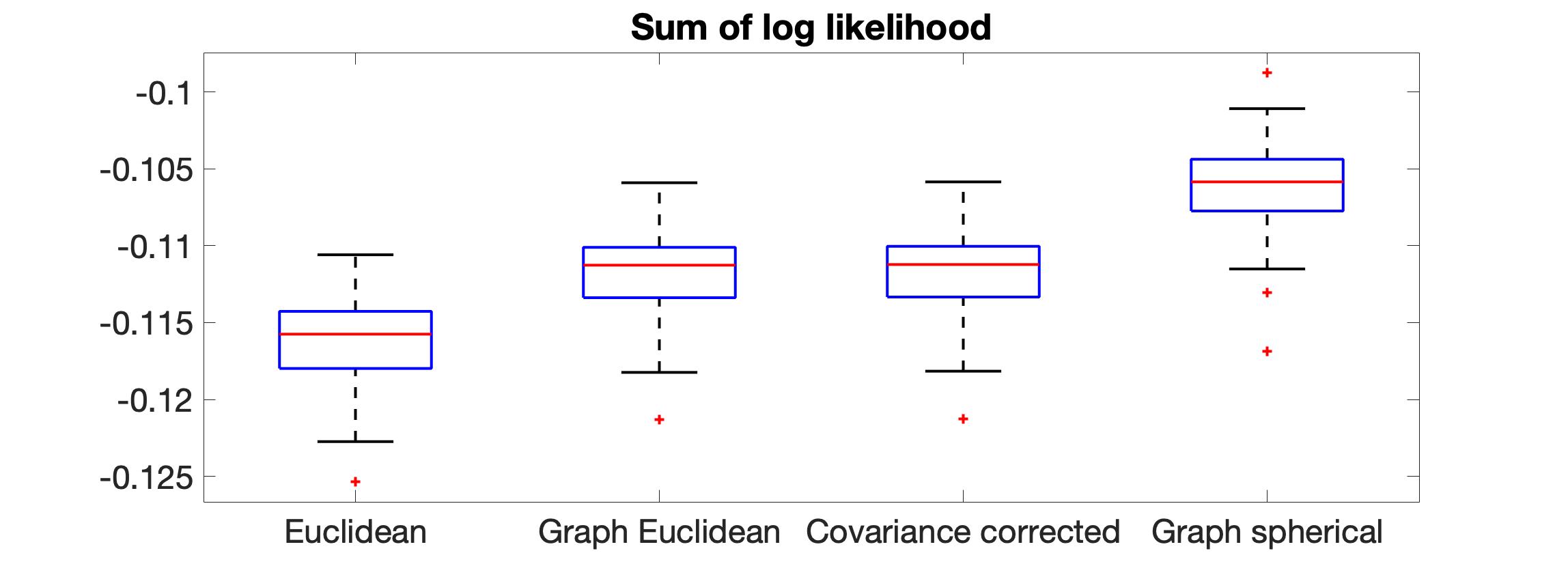}
	\vspace{-0.5cm}
	\caption{Sum of log likelihood for Concrete Compressive Strength data set} 
\label{fig:Concrete}
\end{figure}

From the above two examples we can tell that Euclidean distance is the worst choice because the predictors have non-linear support. Graph Euclidean and covariance corrected distances improve the performance a lot, while graph spherical distance outperforms all competitors.

\subsection{Geodesic kernel mean regression}
As a related case, we also consider kernel mean regression using a simple modification of the Nadaraya-Watson estimator \citep{nadaraya1964estimating,watson1964smooth}:
$$\widehat m(x)=\frac{\sum_{i=1}^n e^{-d(x_i,x)^2/h}y_i}{\sum_{i=1}^n e^{-d(x_i,x)^2/h}}.$$

Algorithm \ref{alg:GKR} provides details:

\begin{algorithm}[!h]
	\caption{Geodesic kernel regression algorithm} 
		\label{alg:GKR}
	\SetKwData{Left}{left}\SetKwData{This}{this}\SetKwData{Up}{up}
	\SetKwFunction{Union}{Union}\SetKwFunction{FindCompress}{FindCompress}
	\SetKwInOut{Input}{input}\SetKwInOut{Output}{output}
	\Input{Training data $\{x_i,y_i\}_{i=1}^n\subset \RR^D\times \RR$, tuning parameters $k$, $d$, $h$, predictor $x$.}
	\Output{Estimated conditional mean $\hat m(x)$.}
	\BlankLine
	\emph{Estimate pairwise geodesic distance between $x_i$'s $SD$ by Algorithm \ref{alg:global}}\;
	\emph{Calculate $d(x,x_i)$ for neighbors of $x$}\;
	\emph{$d(x,x_i)=d(x,x_{i_0})+SD(i_0,i)$ where $x_{i_0}$ is the closest neighbor of $x$}\;
	\emph{$\widehat m(x) = \frac{\sum_{i=1}^n e^{-d(x_i,x)^2/h}y_i}{\sum_{i=1}^n e^{-d(x_i,x)^2/h}}$}.

\end{algorithm}

Again we have four options for the distance in the first step, $D$, $ID$, $CD$ and our proposed $SD$. We measure the performance by calculating the Root Mean Square Error (RMSE) $\sqrt{\sum_{i=1}^{n_{test}}(\widehat m(x_i)-y_i)^2/n_{test}}$, and again use cross validation for bandwidth choice. We consider the same two datasets as in Section \ref{sec:CDE} and show the results in Figure \ref{fig:PowerPlant_GKR} and \ref{fig:Concrete_GKR}:

\begin{figure}[h]
	\centering
	\includegraphics[width=1\textwidth, height=0.3\textheight]{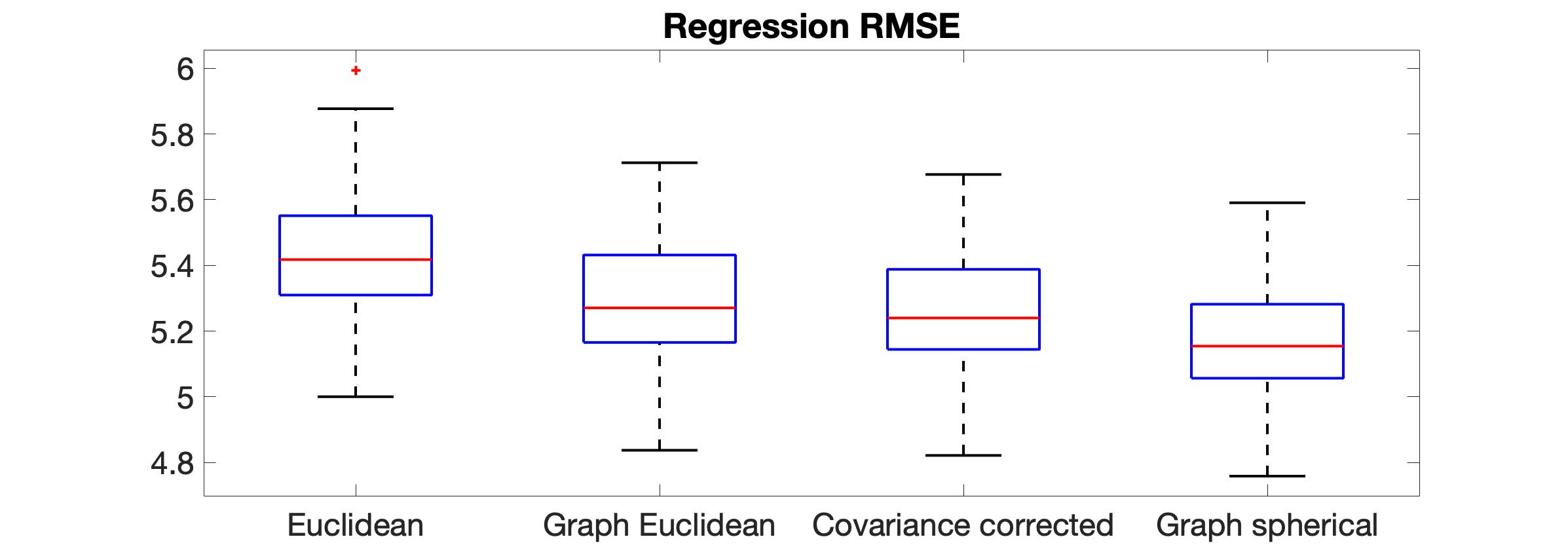}
	\vspace{-0.5cm}
	\caption{Regression RMSE for Combined Cycle Power Plant data set} 
	\label{fig:PowerPlant_GKR}
\end{figure}

\begin{figure}[h]
	\centering
	\includegraphics[width=1\textwidth, height=0.4\textheight]{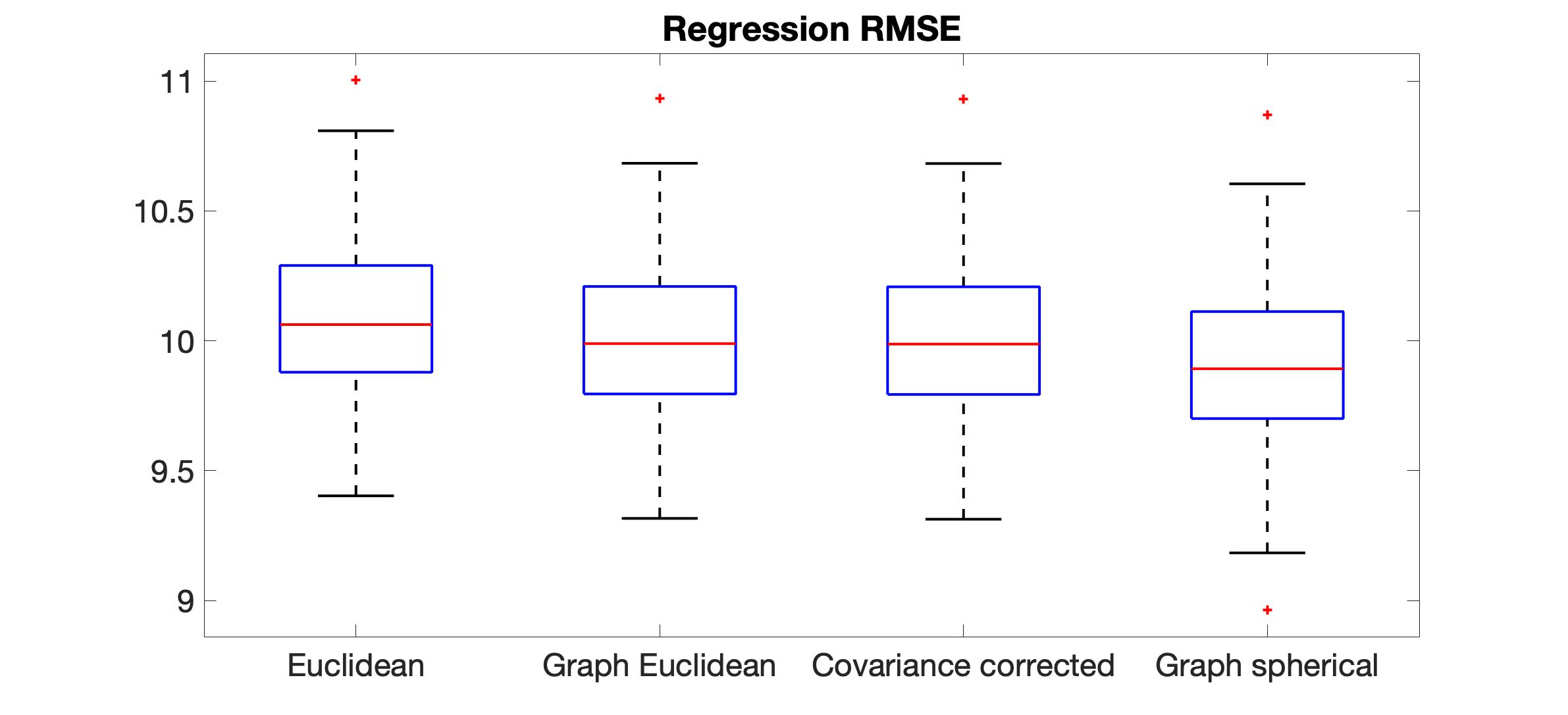}
	\vspace{-0.5cm}
	\caption{Regression RMSE for Concrete Compressive Strength data set} 
	\label{fig:Concrete_GKR}
\end{figure}

Again it is clear that spherical distance outperforms the competitors. 

\section{Discussion}

The choice of distance between data points plays a critical role in many statistical and machine learning procedures.  For continuous data, the Euclidean distance provides by far the most common choice, but we have shown that it can produce badly sub-optimal results in a number of real data examples, as well as for simulated data known to follow a manifold structure up to measurement error.  Our proposed approach seems to provide a clear improvement upon the state-of-the-art for geodesic distance estimation between data points lying close to an unknown manifold, and hence may be useful in many different contexts.

There are multiple future directions of immediate interest.  The first is the question of how the proposed approach performs if the data do not actually have an approximate manifold structure, and whether extensions can be defined that are adaptive to a variety of true intrinsic structures in the data.  We have found that our approach is much more robust to measurement errors than competing approaches; in practice, it is almost never reasonable to suppose that data points fall exactly on an unknown Riemannian manifold.  With this in mind, we allow measurement errors in our approach, so that the data points can deviate from the manifold.  This leads to a great deal of flexibility in practice, and is likely a reason for the good performance we have seen in a variety of real data examples.  However, there is a need for careful work on how to deal with measurement errors and define a single class of distance metrics that can default to Euclidean distance as appropriate or include other structure as appropriate, in an entirely data-dependent manner.

\section*{Acknowledgement}
The authors acknowledge support for this research from an Office of Naval Research grant N00014-14-1-0245/N00014-16-1-2147 and a National Institute of Health grant 5R01ES027498-02.
	
\bibliographystyle{apalike}
\bibliography{ref.bib}

\begin{thebibliography}{}

\bibitem[Bernstein et~al., 2000]{geodist2000}
Bernstein, M., De~Silva, V., Langford, J.~C., and Tenenbaum, J.~B. (2000).
\newblock Graph approximations to geodesics on embedded manifolds.
\newblock Technical report, Technical report, Department of Psychology,
  Stanford University.

\bibitem[Camastra and Vinciarelli, 2001]{camastra2001intrinsic}
Camastra, F. and Vinciarelli, A. (2001).
\newblock Intrinsic dimension estimation of data: An approach based on
  {G}rassberger--{P}rocaccia's algorithm.
\newblock {\em Neural Processing Letters}, 14(1):27--34.

\bibitem[Camastra and Vinciarelli, 2002]{camastra2002estimating}
Camastra, F. and Vinciarelli, A. (2002).
\newblock Estimating the intrinsic dimension of data with a fractal-based
  method.
\newblock {\em IEEE Transactions on Pattern Analysis and Machine Intelligence},
  24(10):1404--1407.

\bibitem[Carter et~al., 2009]{carter2009local}
Carter, K.~M., Raich, R., and Hero~III, A.~O. (2009).
\newblock On local intrinsic dimension estimation and its applications.
\newblock {\em IEEE Transactions on Signal Processing}, 58(2):650--663.

\bibitem[Coope, 1993]{coope1993circle}
Coope, I.~D. (1993).
\newblock Circle fitting by linear and nonlinear least squares.
\newblock {\em Journal of Optimization Theory and Applications},
  76(2):381--388.

\bibitem[Davis et~al., 2011]{davis2011remarks}
Davis, R.~A., Lii, K.-S., and Politis, D.~N. (2011).
\newblock Remarks on some nonparametric estimates of a density function.
\newblock In {\em Selected Works of Murray Rosenblatt}, pages 95--100.
  Springer.

\bibitem[Dijkstra, 1959]{dijkstra1959note}
Dijkstra, E.~W. (1959).
\newblock A note on two problems in connexion with graphs.
\newblock {\em Numerische Mathematik}, 1(1):269--271.

\bibitem[Fan et~al., 2009]{fan2009intrinsic}
Fan, M., Qiao, H., and Zhang, B. (2009).
\newblock Intrinsic dimension estimation of manifolds by incising balls.
\newblock {\em Pattern Recognition}, 42(5):780--787.

\bibitem[Floyd, 1962]{floyd1962algorithm}
Floyd, R.~W. (1962).
\newblock Algorithm 97: shortest path.
\newblock {\em Communications of the ACM}, 5(6):345.

\bibitem[Fowlkes and Mallows, 1983]{fowlkes1983method}
Fowlkes, E.~B. and Mallows, C.~L. (1983).
\newblock A method for comparing two hierarchical clusterings.
\newblock {\em Journal of the American Statistical Association},
  78(383):553--569.

\bibitem[Granata and Carnevale, 2016]{granata2016accurate}
Granata, D. and Carnevale, V. (2016).
\newblock Accurate estimation of the intrinsic dimension using graph distances:
  Unraveling the geometric complexity of datasets.
\newblock {\em Scientific Reports}, 6:31377.

\bibitem[Hein and Audibert, 2005]{hein2005intrinsic}
Hein, M. and Audibert, J.-Y. (2005).
\newblock Intrinsic dimensionality estimation of submanifolds in r d.
\newblock In {\em Proceedings of the 22nd International Conference on Machine
  learning}, pages 289--296. ACM.

\bibitem[Hubert and Arabie, 1985]{hubert1985comparing}
Hubert, L. and Arabie, P. (1985).
\newblock Comparing partitions.
\newblock {\em Journal of Classification}, 2(1):193--218.

\bibitem[Kaufman et~al., 1987]{kaufman1987clustering}
Kaufman, L., Rousseeuw, P., and Dodge, Y. (1987).
\newblock Clustering by means of medoids in statistical data analysis based on
  the l1 norm and related method.
\newblock {\em North-Holland}, pages 405--416.

\bibitem[Kaya et~al., 2012]{kaya2012local}
Kaya, H., T{\"u}fekci, P., and G{\"u}rgen, F.~S. (2012).
\newblock Local and global learning methods for predicting power of a combined
  gas \& steam turbine.
\newblock In {\em Proceedings of the International Conference on Emerging
  Trends in Computer and Electronics Engineering (ICETCEE)}, pages 13--18.

\bibitem[K{\'e}gl, 2003]{kegl2003intrinsic}
K{\'e}gl, B. (2003).
\newblock Intrinsic dimension estimation using packing numbers.
\newblock In {\em Advances in Neural Information Processing Systems}, pages
  697--704.

\bibitem[Levina and Bickel, 2005]{levina2005maximum}
Levina, E. and Bickel, P.~J. (2005).
\newblock Maximum likelihood estimation of intrinsic dimension.
\newblock In {\em Advances in Neural Information Processing Systems}, pages
  777--784.

\bibitem[Li et~al., 2018]{spherelets}
Li, D., Mukhopadhyay, M., and Dunson, D.~B. (2018).
\newblock Efficient manifold and subspace approximations with spherelets.
\newblock {\em arXiv preprint arXiv:1706.08263}.

\bibitem[Lohweg and Doerksen, 2012]{Banknote}
Lohweg, V. and Doerksen, H. (2012).
\newblock {UCI} machine learning repository.

\bibitem[Malik et~al., 2019]{malik2019connecting}
Malik, J., Shen, C., Wu, H.-T., and Wu, N. (2019).
\newblock Connecting dots: from local covariance to empirical intrinsic
  geometry and locally linear embedding.
\newblock {\em Pure and Applied Analysis}, 1(4):515--542.

\bibitem[Meng et~al., 2008]{geodist2008}
Meng, D., Leung, Y., Xu, Z., Fung, T., and Zhang, Q. (2008).
\newblock Improving geodesic distance estimation based on locally linear
  assumption.
\newblock {\em Pattern Recognition Letters}, 29(7):862--870.

\bibitem[Meng et~al., 2007]{geodist2007}
Meng, D., Xu, Z., Gu, N., and Dai, M. (2007).
\newblock Estimating geodesic distances on locally linear patches.
\newblock In {\em Signal Processing and Information Technology, 2007 IEEE
  International Symposium on}, pages 851--854. IEEE.

\bibitem[Nadaraya, 1964]{nadaraya1964estimating}
Nadaraya, E.~A. (1964).
\newblock On estimating regression.
\newblock {\em Theory of Probability \& Its Applications}, 9(1):141--142.

\bibitem[Park and Jun, 2009]{park2009simple}
Park, H.-S. and Jun, C.-H. (2009).
\newblock A simple and fast algorithm for k-medoids clustering.
\newblock {\em Expert Systems with Applications}, 36(2):3336--3341.

\bibitem[Rosenberg and Hirschberg, 2007]{rosenberg2007v}
Rosenberg, A. and Hirschberg, J. (2007).
\newblock V-measure: A conditional entropy-based external cluster evaluation
  measure.
\newblock In {\em Proceedings of the 2007 Joint Conference on Empirical Methods
  in Natural Language Processing and Computational Natural Language Learning
  (EMNLP-CoNLL)}.

\bibitem[Said et~al., 2017]{said2017riemannian}
Said, S., Bombrun, L., Berthoumieu, Y., and Manton, J.~H. (2017).
\newblock Riemannian gaussian distributions on the space of symmetric positive
  definite matrices.
\newblock {\em IEEE Transactions on Information Theory}, 63(4):2153--2170.

\bibitem[Silva and Tenenbaum, 2003]{silva2003global}
Silva, V.~D. and Tenenbaum, J.~B. (2003).
\newblock Global versus local methods in nonlinear dimensionality reduction.
\newblock In {\em Advances in Neural Information Processing Systems}, pages
  721--728.

\bibitem[Smolyanov et~al., 2007]{smolyanov2007chernoff}
Smolyanov, O.~G., Weizs{\"a}cker, H.~v., and Wittich, O. (2007).
\newblock Chernoff's theorem and discrete time approximations of brownian
  motion on manifolds.
\newblock {\em Potential Analysis}, 26(1):1--29.

\bibitem[Strehl and Ghosh, 2002]{strehl2002cluster}
Strehl, A. and Ghosh, J. (2002).
\newblock Cluster ensembles---a knowledge reuse framework for combining
  multiple partitions.
\newblock {\em Journal of Machine Learning Research}, 3(Dec):583--617.

\bibitem[Tenenbaum et~al., 2000]{isomap2000}
Tenenbaum, J.~B., De~Silva, V., and Langford, J.~C. (2000).
\newblock A global geometric framework for nonlinear dimensionality reduction.
\newblock {\em Science}, 290(5500):2319--2323.

\bibitem[T{\"u}fekci, 2014]{tufekci2014prediction}
T{\"u}fekci, P. (2014).
\newblock Prediction of full load electrical power output of a base load
  operated combined cycle power plant using machine learning methods.
\newblock {\em International Journal of Electrical Power \& Energy Systems},
  60:126--140.

\bibitem[Vinh et~al., 2009]{vinh2009information}
Vinh, N.~X., Epps, J., and Bailey, J. (2009).
\newblock Information theoretic measures for clusterings comparison: is a
  correction for chance necessary?
\newblock In {\em Proceedings of the 26th Annual International Conference on
  Machine Learning}, pages 1073--1080. ACM.

\bibitem[Warshall, 1962]{warshall1962theorem}
Warshall, S. (1962).
\newblock A theorem on boolean matrices.
\newblock In {\em Journal of the ACM}. Citeseer.

\bibitem[Watson, 1964]{watson1964smooth}
Watson, G.~S. (1964).
\newblock Smooth regression analysis.
\newblock {\em Sankhy{\=a}: The Indian Journal of Statistics, Series A}, pages
  359--372.

\bibitem[Wu et~al., 2018]{wu2018think}
Wu, H.-T., Wu, N., et~al. (2018).
\newblock Think globally, fit locally under the manifold setup: Asymptotic
  analysis of locally linear embedding.
\newblock {\em The Annals of Statistics}, 46(6B):3805--3837.

\bibitem[Yang, 2004]{geodist2004}
Yang, L. (2004).
\newblock K-edge connected neighborhood graph for geodesic distance estimation
  and nonlinear data projection.
\newblock In {\em Pattern Recognition, 2004. ICPR 2004. Proceedings of the 17th
  International Conference on}, volume~1, pages 196--199. IEEE.

\bibitem[Yeh, 1998]{yeh1998modeling}
Yeh, I.-C. (1998).
\newblock Modeling of strength of high-performance concrete using artificial
  neural networks.
\newblock {\em Cement and Concrete Research}, 28(12):1797--1808.

\end{thebibliography}

	\section{Appendix}
	\subsection{Centered $k$-osculating sphere solution}
	We solve the optimization problem described in Definition \ref{def:CSPCA}.
	\begin{proof}[Proof of Theorem \ref{thm:CSPCA}]
		Our goal is to minimize the function
		$$f(c)=\sum_{x_j\in X^{[k]}_i}\left(\|y_j-c\|^2-\|y_i-c\|^2\right)^2.$$
		First we simply $f(c)$ and omit the terms that do not depend on $c$ in the third equation:
		\begin{align*}
		f(c)& = \sum_{x_j\in X^{[k]}_i}\left\{(y_j-c)^\top (y_j-c) -(y_i-c)^\top (x_i-c)\right\}^2\\
		& =\sum_{x_j\in X^{[k]}_i}\left\{\|y_j\|^2-\|y_i\|^2-2c^\top(y_j-y_i)\right\}^2\\
		& = 4c^\top  \sum_{x_j\in X^{[k]}_i}\left\{ (y_j-y_i)(y_j-y_i)^\top\right\} c-4c^\top\sum_{x_j\in X^{[k]}_i}(\|y_j\|^2-\|y_i\|^2)(y_j-y_i)\\
		& = 4c^\top H c-4c^\top f,
		\end{align*}	
		where $H =\sum_{x_j\in X^{[k]}_i}\left\{ (y_j-y_i)(y_j-y_i)^\top\right\}$ and $f=\sum_{x_j\in X^{[k]}_i}(\|y_j\|^2-\|y_i\|^2)(y_j-y_i)$.
		
		This is a quadratic function with respect to $c$ and the minimizer is given by
		$$c^* = \frac{1}{2}H^{-1}f.$$
		Then $r^* = \|x_i-c^*\|$ is the radius of the centered $k$-osculating sphere.\end{proof}
	
	\subsection{Local estimation error}
	In this section we prove Theorem \ref{thm:curve}-\ref{thm:general}. 
	\subsubsection{Proof for curves}
	First we prove Theorem \ref{thm:curve}, the error bound for curves.
	\begin{proof}[Proof of Theorem \ref{thm:curve}]
		At the fixed points $x=\gamma(0)$, let $\boldsymbol{t}=\gamma'(0)$ and $\bold{n}=\frac{\gamma''(0)}{\|\gamma''(0)\|}$, then $(-\bold{n},\boldsymbol{t})$ is an orthonormal basis at $x$. Then the Taylor expansion $\gamma(s)=\gamma(0)+\gamma'(0)s+\frac{s^2}{2}\gamma''(0)+\frac{s^3}{6}\gamma'''(s)+O(s^4)$ can be written in the new coordinates as
		$$\gamma(s)=\bold{0_2}+\begin{bmatrix}
		0\\s
		\end{bmatrix}+\begin{bmatrix}
		-\frac{\kappa}{2}s^2\\0
		\end{bmatrix}+\begin{bmatrix}
		v_1 s^3\\v_2 s^3
		\end{bmatrix}+O(s^4)=\begin{bmatrix}
		-\frac{\kappa}{2}s^2+v_1s^3\\s+v_2s^3
		\end{bmatrix}+O(s^4),$$
		where $v_1$ and $v_2$ are unknown constants subject to the constraint $\|\gamma'(s)\|=1$. Observe that $\gamma'(s)=\begin{bmatrix}
		-\kappa s +3v_1 s^2\\
		1+3v_2 s^2
		\end{bmatrix}+O(s^3)$ so $\|\gamma(s)\|^2=1+\kappa^2s^2+6v_2s^2+O(s^3)=1$ then we conclude that $v_2=-\frac{\kappa^2}{6}$. As a result, $\gamma(s)=\begin{bmatrix}
		-\frac{\kappa}{2}s^2+v_1s^3\\s-\frac{\kappa^2}{6}s^3
		\end{bmatrix}+O(s^4).$ For convenience, we assume $\kappa>0$; the proof is the same when $\kappa\leq 0$.

		Let $\theta$ be the angle between $x-c$ and $y-c$. Then the spherical distance between $\pi(y)$ and $x$ is $r\theta$. In section \ref{local_dist}, we characterize $\theta$ by $\cos(\theta)=r \arccos\left\{\frac{x-c}{r}\cdot \frac{\pi(y)-c}{r}\right\}$, here we characterize $\theta$ by $\tan(\theta)$ for computational simplicity. Let $y_t$ be the intersection of the tangent space spanned by $\gamma'(0)$ and the straight line connecting $y$ and $c$, and let $y_l$ be the projection of $y$ onto the tangent space. Then we can focus on the triangle $\triangle cxy_l$ with $x-c\parallelsum y_l-y$. Observe that
		$$\tan(\theta)= \frac{\|y_t-x\|}{\|x-c\|}=\frac{\|y_t-y_l\|}{\|y-y_l\|}=\frac{\|y_t-x\|-\|y_l-x\|}{\|y-y_l\|},$$
		so $\|y_t-x\|\|y-y_l\|=r\|y_t-x\|-r\|y_l-x\|$, hence $(r-\|y-y_l\|)\|y_t-x\|=r\|y_l-x\|$. As a result,
		\begin{equation}\label{tan}
		\tan(\theta)=\frac{\|y_t-x\|}{r}=\frac{\|y_l-x\|}{r-\|y-y_l\|}.
		\end{equation}
		By the definition of $y_l$, we can write $y_l=\begin{bmatrix}
		0\\s-\frac{\kappa^2}{6}s^3+O(s^4)
		\end{bmatrix}$ and similarly $y-y_l = \begin{bmatrix}
		\frac{\kappa}{2}s^2-v_1s^3+O(s^4)\\0
		\end{bmatrix}$. Plugging these coordinates in equation \ref{tan}, we have
		\begin{align}
		\tan(\theta) & = \frac{\|y_l-x\|}{r-\|y-y_l\|}= \frac{s-\frac{\kappa^2}{6}s^3+O(s^4)}{r-\left(\frac{\kappa}{2}s^2-v_1s^3+O(s^4)\right)} \nonumber\\
		&  =\frac{1}{r}\left\{s-\frac{\kappa^2}{6}s^3+O(s^4)\right\}\left[1-\left\{\frac{\kappa}{2r}s^2-\frac{v_1}{r}s^3+O(s^4)\right\}\right]^{-1} \nonumber\\
		& = \frac{1}{r}\left\{s-\frac{\kappa^2}{6}s^3+O(s^4)\right\}\left\{1+\frac{\kappa}{2r}s^2-\frac{v_1}{r}s^3+O(s^4) \right\}\nonumber\\
		& = \frac{1}{r}\left\{s+\frac{\kappa}{2r}s^3-\frac{\kappa^2}{6}s^3+O(s^4)\right\}. \label{eqn:tan}
		\end{align}
		Finally, by the Taylor expansion of $\arctan$, the spherical distance is 
		\begin{align}
		d_S(x,y)&=r\theta = r\arctan\left\{\tan(\theta)\right\}=r\arctan\left[\frac{1}{r}\left\{s+\frac{\kappa}{2r}s^3-\frac{\kappa^2}{6}s^3+O(s^4)\right\}\right]\nonumber\\
		& = r\left[\frac{1}{r}\left\{s+\frac{\kappa}{2r}s^3-\frac{\kappa^2}{6}s^3+O(s^4)\right\}-\frac{1}{3r^3}s^3+O(s^4)\right]\nonumber\\
		& = s+\left(\frac{\kappa}{2r}-\frac{\kappa^2}{6}-\frac{1}{3r^2}\right)+O(s^4) \nonumber\\
		& = s-\frac{1}{6}\left(\kappa-\frac{1}{r}\right)\left(\kappa-\frac{2}{r}\right)s^3+O(s^4)=s+O(s^4).\label{eqn:dist}
		\end{align}
		The last step results from the fact that $\kappa=\frac{1}{r}$.
	\end{proof}
	
	\subsubsection{Proof for hyper-surfaces}
	Based on the proof of Theorem \ref{thm:curve}, we next prove Theorem \ref{thm:hypersurf}, the error bound for hyper-surfaces.
	
	\begin{proof}[Proof of Theorem \ref{thm:hypersurf}]
		Define $\gamma(s)\coloneqq \exp_x(sv)$ and denote the normal vector of $T_x M$ by $\boldsymbol{n}$. Let $v=\gamma'(0)$ and expand $v$ to an orthonormal  basis of $T_xM$, denoted by $\{v,v_2,\cdots,v_d\}$. Since $\boldsymbol{n}\perp T_xM$, $\{-n,v,v_2,\cdots,v_d\}$ forms a basis for $\RR^{d+1}$. Before doing Taylor expansion, we first prove that $\gamma''(0)\parallelsum \boldsymbol{n}$, where $\parallelsum$ representing parallel relation. Denote the projection to $\boldsymbol{n}$ by $^\perp$ and the covariant derivative by $\nabla$. Since $\gamma$ is a geodesic, we have
		$$0=\nabla_\gamma'\gamma'=\gamma''-(\gamma'')^\perp,$$
		which implies $\gamma''(0)\parallelsum\boldsymbol{n}$. By Taylor expansion, we rewrite $\gamma$ in terms of the new coordinates:
		$$\gamma(s)=\bold{0_{d+1}}+\begin{bmatrix}
		0\\s\\0\\ \vdots\\0
		\end{bmatrix}+\begin{bmatrix}
		-\frac{\kappa_y}{2}s^2\\0\\\vdots\\0
		\end{bmatrix}+\begin{bmatrix}
		\alpha_1 s^3\\\alpha_2 s^3\\\vdots\\\alpha_{d+1} s^3
		\end{bmatrix}+O(s^4)=\begin{bmatrix}
		-\frac{\kappa}{2}s^2+\alpha_1s^3\\s+\alpha_2s^3\\ \alpha_3 s^3\\\vdots\\\alpha_{d+1}s^3
		\end{bmatrix}+O(s^4).$$
		Again, by the constraint $\|\gamma'\|=1$, we have $\alpha_2= -\frac{\kappa_y^2}{6}$. As before, denote the intersection of $y-c$ and $T_xM$ by $y_t$ and the angle between $x-c$ and $y-c$ by $\theta$, so $$\tan(\theta)=\frac{\|y_t-x\|}{\|x-c\|}=\frac{\|y_t-x\|}{r}.$$
		By direct calculation, we derive that the coordinates for $y_t$ as
		$$y_t=\begin{bmatrix}
		0\\ss_t-\frac{\kappa_y^2}{6}s^3s_t\\ O(s^3)s_t\vdots\\ O(s^3)s_t
		\end{bmatrix}+O(s^4),$$
		where $s_t = \frac{1}{1+\frac{\kappa_y}{2r}s^2+O(s^3)}=1+\frac{\kappa_y}{2r}s^2+O(s^3)$. As a result,
		\begin{align*}
		\tan(\theta)&=\frac{\|y_t-x\|}{r}=\frac{1}{r}\left\{ s^2s_t^2-\frac{\kappa_y^2}{3}s^4s_t^2 +O(s^6)  \right\}^{\frac{1}{2}}\\
		& = \frac{s}{r}\left[ 1+\frac{\kappa_y}{r}s^2+O(s^3) -\frac{\kappa_y^2}{3}s^2\left\{1+\frac{\kappa_y}{r}s^2+O(s^3)\right\}  \right]^{\frac{1}{2}}\\
		&  = \frac{s}{r}\left\{ 1+\frac{\kappa_y}{r}s^2-\frac{\kappa_y^2}{3}s^2 +O(s^3)  \right\}^{\frac{1}{2}}= \frac{s}{r}\left\{ 1+\frac{\kappa_y}{2r}s^2-\frac{\kappa_y^2}{6}s^2 +O(s^3)  \right\}\\
		& = \frac{1}{r}\left\{ s+\frac{\kappa_y}{2r}s^3-\frac{\kappa_y^2}{6}s^3 +O(s^4)  \right\}.
		\end{align*}
		This is exactly the same as Equation \ref{eqn:tan}. Then similar to the proof of Equation \ref{eqn:dist}, we conclude that 
		\begin{align}
		d_S(x,y) & = r \theta = s-\frac{1}{6}\left(\kappa_y-\frac{1}{r}\right)\left(\kappa_y-\frac{2}{r}\right)s^3+O(s^4)\nonumber\\
		& =s+(\kappa_y-\kappa_0)(\kappa_y-2\kappa_0)s^3+O(s^4).\label{eqn:sph_dist}
		\end{align}\end{proof}
	Now we can prove Corollary \ref{cly:hypersurf}.
	\begin{proof}[Proof of Corollary \ref{cly:hypersurf}]
		\begin{enumerate}[(1)]
			\item From Equation \ref{eqn:sph_dist}, the third order term vanishes if and only if $\kappa_y=\kappa_0$ or $\kappa_y=2\kappa_0$.
			\item Again by  Equation \ref{eqn:sph_dist}, we have
			\begin{align*}
			|d_S(x,y) -s|\leq& = |\kappa_y-\kappa_0||\kappa_y-\kappa_0-\kappa_0|s^3=O(s^4)\\
			&\leq \bar r (\bar r + |\kappa_0|)s^3+O(s^4)\leq O(\bar r s^3).
			\end{align*}
		\end{enumerate}
	\end{proof}
	\subsubsection{Proof for general cases}
	Finally we prove Theorem \ref{thm:general}.
	\begin{proof}[Proof of Theorem \ref{thm:general}]
		For $y=\exp_x(sv)=\gamma(s)$, let $v_1=v$, and expand $v_1$ to an orthonormal basis of $T_xM$ denoted by $\{v_1,\cdots,v_d\}$. Similarly, fix any orthonormal basis of the normal space $T_xM^\perp$, denote by $\{\boldsymbol{n_1},\cdots,\boldsymbol{n_{D-d}}\}$. The same as the proof of Theorem \ref{thm:hypersurf}, $\gamma''(0)\in T_xM^\perp$, so the Taylor expansion of $\gamma$ can be expressed in the new coordinates as follows:
		$$\gamma(s)=\bold{0_D}+\begin{bmatrix}
		\boldsymbol{0_D-d}\\s\\\boldsymbol{0_{d-1}}
		\end{bmatrix}+\begin{bmatrix}
		\frac{\alpha_1}{2}s^2\\\vdots\\\frac{\alpha_{D-d}}{2}s^2\\\boldsymbol{0_d}
		\end{bmatrix}+O(s^3)=\begin{bmatrix}
		\frac{\alpha_1}{2}s^2\\\vdots\\ \frac{\alpha_{D-d}}{2} s^2\\s\\\boldsymbol{0_{d-1}}
		\end{bmatrix}+O(s^3),$$
		where $\gamma''(0)=[\alpha_1,\cdots,\alpha_{D-d},\boldsymbol{0_d}]^\top$. 
		
		Assume $c = x+r\boldsymbol{n}$ where $\bold{n}=\sum_{i=1}^{D-d}\beta_i \boldsymbol{n_i}$ is a unit vector in the normal space $T_xM^\perp$ so $\sum_{i=1}^{D-d}\beta_i^2=1$. The idea of the proof is to connect $s$ and $d_S(x,y)$ by the tangent space. To be more specific, denote the projection onto $T_xM $ by $P_x$, and define $y_l = P_x(y)$ and $y_s=P_x\left\{\pi(y)\right\}$ . Then it suffices to show the following three statements:
		\begin{enumerate}[i]
			\item  $d_M(x,y)= \|y_l-x\| + O(s^3)$.\label{stmt:general}
			\item $d_S(x,y) = \|y_s-x\|+O(s^3)$.\label{stmt:sphere}
			\item $ \|y_s-x\|=s+O(s^3)$. \label{stmt:euc}
		\end{enumerate} 
		Observe that the first statement implies that the Euclidean distance between base point and the projection to the tangent space is an estimator of the geodesic distance with error $O(s^3)$. Since $T_xM$ is the common tangent space of $M$ and $S_x(c,r)$, Statement \ref{stmt:general} implies Statement \ref{stmt:sphere}.
		So it suffices to show Statement \ref{stmt:general} and Statement \ref{stmt:euc}.
		Before we prove the statements, we need to calculate the coordinates of $y_l$ and $y_s$. By the definition of $P_x$, we have
		\begin{equation}\label{eqn:y_l}
		y_l = P_x(y)=\begin{bmatrix}
		\boldsymbol{0_{D-d}}\\s+O(s^3)\\O(s^3)
		\end{bmatrix}.
		\end{equation}
		
		Similarly, by the definition of $\pi$ and linearity of $P_x$, 
		\begin{align}
		y_s &=P_x\left\{c+\frac{r(y-c)}{\|y-c\|}\right\}=P_x(c)+\frac{r}{\|y-c\|}P_x(y)-\frac{r}{\|y-c\|}P_x(c)\nonumber\\
		& = \boldsymbol{0_D}+\frac{r}{\|y-c\|}
		\begin{bmatrix}
		\boldsymbol{0_{D-d}}\\s+O(s^3)\\ \boldsymbol{0_{d-1}}+O(s^3)
		\end{bmatrix}-\boldsymbol{0_D}=\frac{r}{\|y-c\|}
		\begin{bmatrix}
		\boldsymbol{0_{D-d}}\\s+O(s^3)\\ O(s^3)
		\end{bmatrix}\label{eqn:y_s}
		\end{align}
		Now we can calculate the distances involved in the statements. Firstly, $\|y_l-x\|=s+O(s^3)=d_M(x,y)+O(s^3)$ is a direct consequence of Equation \ref{eqn:y_l}.
		
		Accordingly to Equation \ref{eqn:y_s},  the only missing part to calculate $\|y_s-x\|$, is $\|y-c\|$. Recall that $c = x+r\sum_{i=1}^{D-d}\beta_i \boldsymbol{n_i}$, so
		\begin{align*}
		\|y-c\|&=\left[\sum_{i=1}^{D-d}\left\{\frac{\alpha_1}{2}s^2+O(s^3)-r\beta_i\right\}^2+s^2+O(s^4)\right]^{\frac{1}{2}}\\
		& = \left\{r^2+O(s^2)\right\}^{\frac{1}{2}}=r+O(s^2)
		\end{align*}
		As a result, 
		$$\|y_s-x\|=\frac{r}{\|y-c\|}\left\{s+O(s^3)\right\}=\frac{r}{r+O(s^2)}\left\{s+O(s^3)\right\}=s+O(s^3).$$
	\end{proof}

	\subsection{Global estimation error}
	
	In this section we prove the global error bound stated in Theorem \ref{thm:globalerror}.
	\begin{proof}[Proof of Theorem \ref{thm:globalerror}]
		Before proving the inequalities, we define the graph geodesic distance
		$$d_G(x,y)\coloneqq \min_P \sum_{i=0}^{p-1}d_M(x_i,x_{i+1}),$$
		where $P$ varies over all paths along $G$ with $x_0=x$ and $x_p=y$. Clearly we have $d_M(x,y)\leq d_G(x,y)$ for any $x,y\in M$ and any graph $G$. The idea is to show $d_SG(x,y)\approx d_G(x,y)\approx d_M(x,y)$. 
		
		First we prove the first inequality: $(1-\lambda_1)d_M(x,y)\leq d_{SG}(x,y)$. Assume $P=\{x_i\}_{i=1}^p$ minimizes $d_{SG}$. Then we have
		\begin{align*}
		d_SG(x,y)&=\sum_{i=0}^{p-1} d_{S}(x_i,x_{i+1})\geq \sum_{i=0}^{p-1} (1-C\epsilon^2_{\max})d_M(x_i,x_{i+1})\\
		&\geq (1-C\epsilon^2_{\max})\sum_{i=0}^{p-1}d_M(x_i,x_{i+1})\geq (1-C\epsilon^2_{\max}) d_G(x,y)\geq (1-\lambda_1) d_M(x,y),
		\end{align*}
		where $\lambda_1=C\epsilon^2_{\max}$. To prove the other inequality, assume $P=\{x_i\}_{i=0}^{p-1}$ minimizes $d_G$, then
		\begin{align*}
		d_{SG}(x,y)&\leq \sum_{i=0}^{p-1} d_S(x_i,x_{i=1})\leq \sum_{i=0}^{p-1} \left(1+C\epsilon^2_{\max}\right)d_M(x_i,x_{i+1})\\
		&= \left(1+C\epsilon^2_{\max}\right) \sum_{i=0}^{p-1}d_M(x_i,x_{i+1})= \left(1+C\epsilon^2_{\max}\right) d_G(x,y).
		\end{align*}
		By theorem 2 in \cite{geodist2000}, $d_G(x,y)\leq (1+\frac{4\delta}{\epsilon_{\min}})d_M(x,y)$. Combining the above two equalities we conclude that  $d_{SG}(x,y)\leq (1+\lambda_2)d_M(x,y)$, where $\lambda_2= \frac{4\delta}{\epsilon_{\min}}+C\epsilon_{\max}^2+\frac{4C\delta\epsilon^2_{\max}}{\epsilon_{\min}}$.
	\end{proof}
	
\end{document}